\newtheorem{thm}{Theorem}
\newtheorem{lem}[thm]{Lemma}
\newtheorem{cor}[thm]{Corollary}
\newtheorem{ques}{Question}
\newcommand{\cp}{\,\square\,}
\newcommand{\mC}{\mathcal{C}_{\rho^{\rm o}}}
\newcommand{\mO}{\mathcal{O}}
\newcommand{\mT}{{\mathcal T}_{\rho^{\rm o}}}
\newcommand{\mop}{\rho^{\rm o}}
\newcommand{\mCtwo}{\mathcal{C}_{\rho}}
\newcommand{\mCalpha}{\mathcal{C}_{\alpha}}
\begin{document}

\title{Graphs with a unique maximum open packing}

\author{
Bo\v{s}tjan Bre\v{s}ar$^{a,b}$
\and
Kirsti Kuenzel$^{c}$
\and
Douglas F. Rall$^{d}$\\
}

\date{\today}

\maketitle

\begin{center}
$^a$ Faculty of Natural Sciences and Mathematics, University of Maribor, Slovenia\\

$^b$ Institute of Mathematics, Physics and Mechanics, Ljubljana, Slovenia\\
$^c$ Department of Mathematics, Western New England University, Springfield, MA\\
$^d$ Department of Mathematics, Furman University, Greenville, SC\\
\end{center}

\begin{abstract}
A set $S$ of vertices in a graph is an open packing if (open) neighborhoods of any two distinct vertices in $S$ are disjoint. In this paper, we consider the graphs that have a unique maximum open packing. We characterize the trees with this property by using four local operations such that any nontrivial tree with a unique maximum open packing can be obtained by a sequence of these operations starting from $P_2$. We also prove that the decision version of the open packing number is NP-complete even when restricted to graphs of girth at least $6$. Finally, we show that the recognition of the graphs with a unique maximum open packing is polynomially equivalent to the recognition of the graphs with a unique maximum independent set, and we prove that the complexity of both problems is not polynomial, unless P=NP.
\end{abstract}

\noindent
{\bf Keywords:} open packing, 2-packing, independence number, tree  \\

\noindent
{\bf AMS Subj.\ Class.\ (2010)}: 05C70, 05C69, 05C05.

\maketitle

\section{Introduction}
A set $D$ of vertices in a graph $G$ is a {\em total dominating set of} $G$ if every vertex of $G$ has a neighbor in $D$. The minimum cardinality of a total dominating set is denoted $\gamma_t(G)$ and called the total domination number. This is one of the most studied invariants in domination theory, and has been surveyed in a monograph of Henning and Yeo~\cite{heyebook}. As pointed out in~\cite{hhs-1998}, open packings are the dual object, as seen from an integer programming standpoint, of total dominating sets. An {\em open packing} in a graph $G$ is a set of vertices whose (open) neighborhoods are pairwise disjoint. (The {\em (open) neighborhood} $N(x)$ of a vertex $x\in V(G)$ is the set $\{v\in V(G)\,:\,xv\in E(G)\}$.) By $\mop(G)$ we denote the maximum cardinality of an open packing in $G$, and call it the {\em open packing number} of $G$.

It follows from definitions of both invariants that $\gamma_t(G)\ge \mop(G)$ for an arbitrary graph $G$ with no isolated vertices. Rall studied the relationship of these two parameters and proved the following:

\begin{thm}[Rall, \cite{rall-2005}] If $T$ is a tree, then $\mop(T) = \gamma_t(T)$.
\end{thm}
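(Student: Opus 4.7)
The plan is to prove the nontrivial inequality $\mop(T)\ge\gamma_t(T)$ by induction on $|V(T)|$; the reverse direction is the general fact noted in the excerpt. Base cases are immediate: for $T=P_2$ both parameters equal $2$, and more generally every star $K_{1,m}$ (a tree of diameter at most $2$) satisfies $\mop=\gamma_t=2$. For the inductive step I fix a longest path $v_0v_1\cdots v_d$ in $T$; a standard consequence of maximality is that every neighbor of $v_1$ other than $v_2$ is a leaf of $T$.

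The easy case is that $v_1$ has a second leaf neighbor $u\ne v_0$. I set $T':=T-u$. Because $v_0$ is a leaf with support $v_1$, every total dominating set must contain $v_1$, and therefore any minimum TDS of $T'$ is already a TDS of $T$; this gives $\gamma_t(T')=\gamma_t(T)$. Moreover $u$ and $v_0$ have the same open neighborhood $\{v_1\}$, so any maximum open packing of $T$ containing $u$ may be altered by swapping $u$ for $v_0$; hence $\mop(T')=\mop(T)$, and the inductive hypothesis on $T'$ closes this case.

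The remaining case is that $v_0$ is the unique leaf neighbor of $v_1$, forcing $\deg_T(v_1)=2$. The obvious reduction $T':=T-\{v_0,v_1\}$ is still a tree, and one gets the upper bound $\gamma_t(T)\le\gamma_t(T')+2$ by adjoining $\{v_0,v_1\}$ to a minimum TDS of $T'$. The matching lower bound $\mop(T)\ge\mop(T')+2$, however, need not hold: taking a maximum open packing $S'$ of $T'$ and adjoining $\{v_0,v_1\}$ creates a conflict precisely when $v_2\in S'$ (then $v_1$ is a common neighbor of $v_0$ and $v_2$) or when some $T'$-neighbor of $v_2$ lies in $S'$ (then $v_2$ is a common neighbor of $v_1$ and that vertex). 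For instance, when $T=P_4$ this reduction gives $T'=P_2$ and both invariants drop by zero, not two.

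The main obstacle is therefore the bookkeeping in this second case: one has to exploit the structure of $v_2$ (whether $v_2$ is itself a support vertex, carries an attached leaf or pair of leaves, or has only $v_1$ and $v_3$ as neighbors) and, in each subcase, either modify $S'$ so that the conflict vertices are avoided or use a different reduction (for instance removing $\{v_0,v_1,v_2\}$, or reducing a pair of ``twin'' vertices hanging off $v_2$) whose decrease in $\gamma_t$ is matched by the decrease in $\mop$. The longest-path assumption constrains the subtrees hanging off $v_2$ to have depth at most $1$, limiting the configurations to finitely many, but the detailed subcase analysis here is where essentially all the combinatorial work sits.
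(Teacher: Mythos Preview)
The paper does not prove this theorem; it is quoted as a result of Rall~\cite{rall-2005} and used as background. So there is no paper proof to compare against, and your attempt must stand on its own.

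As a self-contained argument, your proposal is not a proof but a sketch with an acknowledged gap. Case~1 (where $v_1$ is a strong support vertex) is handled correctly: both invariants are unchanged when a redundant leaf is deleted. In Case~2, however, you yourself observe that the natural reduction $T'=T-\{v_0,v_1\}$ can fail to give $\mop(T)\ge\mop(T')+2$, illustrate this with $P_4$, and then write that ``the detailed subcase analysis here is where essentially all the combinatorial work sits'' --- without supplying that analysis. That is precisely the content of the theorem: the whole difficulty is matching each unit of total domination cost with a unit of open-packing gain, and the longest-path end is exactly where this bookkeeping becomes delicate. Listing the possible local configurations at $v_2$ and gesturing at alternative reductions is not the same as verifying, in each configuration, that some reduction simultaneously drops $\gamma_t$ and $\mop$ by the same amount (and that the smaller tree has no isolated vertices, so $\gamma_t$ is still defined). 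Until those subcases are actually carried out and checked, the inductive step is unfinished.
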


\noindent He then used this result to prove a formula for the total domination number of a direct product of a nontrivial tree $T$ with an arbitrary graph $H$, which reads $\gamma_t(T\times H) = \gamma_t(T)\gamma_t(H)$. For other relations between open packing number and total domination number see~\cite{gzsgk-2018,gh-2018,hrs-2013}.

Graphs which possess a unique minimum total dominating set were studied by Haynes and Henning in~\cite{hahe-2002}, who characterized the trees having this property. In spite of the fact that $\mop(T) = \gamma_t(T)$ when $T$ is a tree, there is seemingly no connection between the trees with a unique maximum open packing and the trees with a unique minimum total dominating set. For instance, the path $P_4$ is a tree with a unique minimum total dominating set, while there are several pairs of vertices in $P_4$ that form a maximum open packing. On the other hand, the path $P_6$ is a small example of a tree that has a unique maximum open packing but has more than one minimum total dominating set.

Trees with a unique maximum independent set were studied by Hopkins and Staton~\cite{hs-1985} and by Gunther et al.~\cite{ghr-1993}. Recently, Jaume and Molina~\cite{jm-2018} provided a characterization of such trees from an algebraic point of view, which can be used for efficient recognition of trees with a unique maximum independent set; see Section~\ref{sec:complexity}. Trees that have a unique maximum $2$-packing (where $2$-packing is a set of vertices having pairwise disjoint closed neighborhoods) were also characterized recently~\cite{bope-2019+}. In this paper, we begin the study of graphs which have a unique maximum open packing, and we denote the set of such graphs by $\mC$. In particular, we show that given a graph $G \in \mC$, we can perform one of four operations on $G$ to create a larger graph $G'$ which also belongs to $\mC$. We then characterize all trees in $\mC$ by showing that any such (nontrivial) tree can be obtained with just these four operations from the two-vertex tree.

We also study complexity issues in identifying graphs in $\mC$. In doing so, we are able to show that the open packing number is related to two well-known graph invariants. Recall that the maximum cardinality of an independent set of vertices in a graph $G$ is the {\em independence number} of $G$ and is denoted by $\alpha(G)$. By $\alpha'(G)$ we denote the maximum cardinality of a matching in $G$ (where a {\em matching} is a set of independent edges). The {\em subdivision of $G$} is the graph $S(G)$ obtained by subdividing every edge of $G$ by exactly one vertex. We will prove that $\mop(S(G)) = \alpha(G) + \alpha'(G)$.

The remainder of this paper is organized as follows. In Section~\ref{sec:meat}, we characterize all trees that have a unique maximum open packing. We do so by showing that every nontrivial tree with a unique maximum open packing can be obtained by starting with a single edge and performing a series of operations. In Section~\ref{sec:complexity}, we show that $\mop(S(G)) = \alpha(G) + \alpha'(G)$ and we discuss the complexity of identifying whether an arbitrary graph, or an arbitrary tree, contains a unique maximum open packing. In particular, we prove that the problems of recognizing the graphs with a unique maximum open packing, a unique maximum 2-packing, and a unique maximum independent set, respectively, are all polynomially equivalent problems (i.e., the complexity of recognizing the graphs in these classes is essentially the same). Finally, we prove that these recognition problems are not polynomial, unless P=NP.

\section{Trees with unique maximum open packings}
\label{sec:meat}
For any $G \in \mC$, we let $U(G)$ represent the unique maximum open packing in $G$ and we refer to $U(G)$ as the {\em $\mop(G)$-set}. In this section, we will characterize all trees in $\mC$. To that end, we begin with the following preliminary results that apply to all graphs (not just trees) in $\mC$. Recall that $x \in V(G)$ is a {\em leaf} in $G$ if $\deg(x) = 1$ (where the degree, $\deg(x)$, of $x$ is the cardinality of its neighborhood), and we refer to the neighbor of $x$ as its {\em support vertex}. Additionally, we say that $v$ is a {\em strong support vertex} if $v$ is adjacent to more than one leaf.
\vskip5mm

\begin{lem}\label{lem:leaf}
If $G \in \mC$, then every leaf of $G$ belongs to $U(G)$.
\end{lem}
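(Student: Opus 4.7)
The plan is to argue by contradiction via a standard swap: assume some leaf $x$ lies outside $U(G)$, and produce a different open packing of the same or larger size.

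Let $x$ be a leaf with support vertex $v$, and suppose $x \notin U(G)$. Set
\[
A \;=\; \{y \in U(G) : y \sim v\},
\]
the collection of members of $U(G)$ that are adjacent to $v$. The first observation is that $|A| \le 1$: indeed, if $y_1, y_2$ were two distinct vertices of $U(G)$ both adjacent to $v$, then $v \in N(y_1) \cap N(y_2)$, contradicting the fact that $U(G)$ is an open packing. Note that $v$ itself is not in $A$ (since $v \notin N(v)$), even though $v$ may belong to $U(G)$.

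Now consider the set
\[
U' \;=\; (U(G) \setminus A) \cup \{x\}.
\]
I would verify that $U'$ is an open packing. For any two vertices of $U' \setminus \{x\} \subseteq U(G)$, their open neighborhoods are disjoint by hypothesis. For a vertex $b \in U' \setminus \{x\}$, one has $b \notin A$, meaning $v \notin N(b)$, so $N(x) \cap N(b) = \{v\} \cap N(b) = \emptyset$. Hence $U'$ is indeed an open packing of $G$.

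The conclusion then splits on $|A|$. If $|A| = 0$, then $|U'| = |U(G)| + 1$, contradicting the fact that $U(G)$ is a maximum open packing. If $|A| = 1$, then $|U'| = |U(G)|$, but $U'$ differs from $U(G)$ (since $x \in U'$ while $x \notin U(G)$), contradicting the uniqueness of $U(G)$. Either way we obtain a contradiction, and so $x \in U(G)$. There is no serious obstacle in this argument; the only point requiring attention is the case analysis on $|A|$ and the verification that swapping out at most one vertex of $U(G)$ for the leaf $x$ preserves the open packing property.
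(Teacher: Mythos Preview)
Your argument is correct and is essentially the same swap argument as in the paper: the paper's vertex $z$ plays the role of the unique element of your set $A$ when $|A|=1$, and the paper's ``otherwise $U(G)\cup\{x\}$ is larger'' corresponds to your case $|A|=0$. You have simply made the case analysis and the verification that $U'$ is an open packing more explicit.
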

\begin{proof}
Let $x$ be a leaf and $y$ be its support vertex. We may assume there exists a vertex $z \in U(G)$ that is adjacent to $y$ for otherwise $U(G) \cup \{x\}$ is a larger open packing of $G$.
If $z \not= x$, then $U' = (U(G) -  \{z\})\cup \{x\}$ is an open packing, contradicting the uniqueness of $U(G)$.
\end{proof}

\begin{cor} \label{cor:no11}
If $G \in \mC$, then $G$ has no strong support vertices.
\end{cor}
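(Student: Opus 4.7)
The plan is to derive this immediately from Lemma~\ref{lem:leaf} by a direct contradiction. Suppose, for the sake of contradiction, that $G\in\mC$ has a strong support vertex $v$, and let $x_1,x_2$ be two distinct leaves adjacent to $v$.

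By Lemma~\ref{lem:leaf}, every leaf of $G$ belongs to $U(G)$, so in particular both $x_1$ and $x_2$ lie in $U(G)$. But $v\in N(x_1)\cap N(x_2)$, so the open neighborhoods of these two distinct vertices of $U(G)$ are not disjoint, contradicting the fact that $U(G)$ is an open packing. Hence $G$ has no strong support vertices.

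There is essentially no obstacle here: the only subtlety is to note that the definition of open packing forces the pairwise disjointness of neighborhoods of \emph{distinct} elements of $U(G)$, and that $x_1\ne x_2$ since a vertex cannot be simultaneously two leaves, so the argument above actually produces the needed contradiction. The proof is one or two lines long once Lemma~\ref{lem:leaf} is in hand.
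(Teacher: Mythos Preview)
Your proof is correct and essentially identical to the paper's: both invoke Lemma~\ref{lem:leaf} to place any two leaves in $U(G)$ and then use the open packing condition to conclude they cannot share a common neighbor. The only cosmetic difference is that the paper phrases it as ``any two leaves have disjoint neighborhoods'' while you frame it as a contradiction from a hypothetical strong support vertex.
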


\begin{proof}
Let $u$ and $v$ be leaves in $G$.  By Lemma~\ref{lem:leaf}, $U(G)$ contains both $u$ and $v$.  Since $U(G)$ is an open packing, $N(u) \cap N(v)=\emptyset$.  We infer that $u$ and $v$ are not adjacent to the same support vertex.
\end{proof}

\begin{lem}\label{lem:no21}
Let $G$ be a graph.  If $\ell$ is a leaf in $G$ with support vertex $x$ and $z$ is a leaf with support vertex $y$ such that $N(y) = \{z, x\}$, then $G \not\in \mC$.
\end{lem}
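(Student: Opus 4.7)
The plan is a short proof by contradiction using an exchange argument. Assume $G \in \mC$ and write $U = U(G)$. Since $\ell$ and $z$ are both leaves, Lemma~\ref{lem:leaf} gives $\ell, z \in U$. The vertices $\ell$ and $y$ share the common neighbor $x$ (as $\ell x, xy \in E(G)$), so the open packing property of $U$ forces $y \notin U$. The natural candidate for a second maximum open packing is
\[ U' = (U \setminus \{\ell\}) \cup \{y\}, \]
which is distinct from $U$ because $\ell \in U \setminus U'$, and which has the same cardinality as $U$.

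To complete the proof, I would verify that $U'$ is an open packing. The only pairs in $U'$ that are not already pairs in $U$ involve $y$, so it suffices to show that $N(y) \cap N(u) = \emptyset$ for every $u \in U \setminus \{\ell\}$. Since $N(y) = \{x, z\}$, this reduces to showing that neither $x$ nor $z$ lies in $N(u)$. For $x$: any such $u \in N(x)$ would make $x$ a common neighbor of $u$ and $\ell$, violating the open packing property of $U$ (here I use that $N(\ell) = \{x\}$). For $z$: the only neighbor of $z$ is $y$, and we already have $y \notin U$. Hence $U'$ is an open packing of the same size as $U$ but different from it, contradicting the uniqueness of $U(G)$.

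The main obstacle is essentially spotting the correct exchange; once the swap of $\ell$ for $y$ is identified, the verification is mechanical. The only subtlety is to confirm $y \notin U$ before performing the exchange, so that $U'$ is genuinely a new set of the same cardinality; this is immediate from Lemma~\ref{lem:leaf} applied to $\ell$ combined with the adjacencies $x \sim \ell$ and $x \sim y$.
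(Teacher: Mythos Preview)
Your proof is correct and follows essentially the same exchange argument as the paper: assume $G\in\mC$, use Lemma~\ref{lem:leaf} to get $\ell,z\in U(G)$, deduce $y\notin U(G)$, and swap $\ell$ for $y$ to produce a second maximum open packing. Your write-up is in fact more thorough than the paper's, which simply asserts that $U'=(U(G)\setminus\{\ell\})\cup\{y\}$ is an open packing without spelling out the verification you provide.
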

\begin{proof}
Suppose $G \in \mC$. By Lemma~\ref{lem:leaf}, $z, \ell \in U(G)$, which implies $x \not\in U(G)$ and $y \not\in U(G)$. Hence, $U' = (U(G) -  \{\ell\}) \cup \{y\}$ is another open packing of size $\mop(G)$, which contradicts the fact that $U(G)$ is the unique maximum open packing of $G$.
\end{proof}

\begin{lem}\label{lem:no22}
Let $G$ be a graph. If $\ell_1$ and $\ell_2$ are leaves of $G$ with support vertices $y_1$ and $y_2$, respectively, $N(y_1) = \{\ell_1, x\}$ and $N(y_2) = \{\ell_2,x\}$, then $G \not\in \mC$.
\end{lem}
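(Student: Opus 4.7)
The plan is a proof by contradiction: suppose $G \in \mC$ and let $U = U(G)$ be the unique maximum open packing. By Lemma~\ref{lem:leaf} both $\ell_1$ and $\ell_2$ lie in $U$, and the strategy is to exhibit a different open packing of the same size by performing a single-vertex swap of the form $a \to y_1$.

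The first key step is a structural observation: any two distinct neighbors of $x$ share $x$ as a common neighbor, so $|U \cap N(x)| \leq 1$. In particular, at least one of $y_1, y_2$ is absent from $U$, and after relabeling I may assume $y_1 \notin U$. Next, I would select $a \in U$ to remove: if $U \cap N(x) = \{b\}$, take $a = b$; otherwise (if $U \cap N(x) = \emptyset$), take $a = \ell_1$. In either case $a \in U$ and $a \neq y_1$, so $U' := (U \setminus \{a\}) \cup \{y_1\}$ has the same size as $U$ and is distinct from it (since $y_1 \in U' \setminus U$).

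The remaining verification, that $U'$ is an open packing, reduces to checking $N(y_1) \cap N(v) = \emptyset$ for every $v \in U \setminus \{a\}$. Since $N(y_1) = \{\ell_1, x\}$, the check splits into two parts: $\ell_1 \in N(v)$ would force $v = y_1$, which is impossible as $y_1 \notin U$; and $x \in N(v)$ would force $v \in U \cap N(x) \subseteq \{a\}$, contradicting $v \neq a$. This contradicts the uniqueness of $U$.

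The main conceptual obstacle — and the reason the statement holds at all — is that $\ell_1$ remains in $U'$ despite being adjacent to $y_1$. This is legitimate because $N(\ell_1) \cap N(y_1) = \{y_1\} \cap \{\ell_1, x\} = \emptyset$, i.e., adjacent vertices with no common neighbor may coexist in an open packing. A more naive attempt to swap $\{\ell_1, \ell_2\}$ for $\{y_1, y_2\}$ simultaneously would fail instead, because $N(y_1) \cap N(y_2) = \{x\}$; this failure is exactly the obstruction that forced the bound $|U \cap N(x)| \leq 1$ and dictated the single-vertex form of the swap.
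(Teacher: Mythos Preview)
Your proof is correct and follows essentially the same swap argument as the paper's proof. The paper organizes the cases slightly differently (it first rules out $N(x)\cap U(G)=\emptyset$ by producing a \emph{larger} packing $U(G)\cup\{y_1\}$, then splits on whether the unique element of $N(x)\cap U(G)$ lies in $\{y_1,y_2\}$), but your uniform ``swap $a\to y_1$ after WLOG $y_1\notin U$'' is a clean consolidation of the same idea.
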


\begin{proof}
Suppose $G \in \mC$. By Lemma~\ref{lem:leaf}, $\{\ell_1, \ell_2\} \subseteq U(G)$, which implies that $x \not\in U(G)$. The open neighborhood of $x$ intersects $U(G)$, for otherwise $U(G) \cup \{y_1\}$ is a larger open packing.  If $z \in U(G) \cap N(x)$ and $z \not \in \{y_1,y_2\}$, then $(U(G)  -  \{z\}) \cup \{y_1\}$ is another maximum open packing, which is a contradiction. Thus, assume without loss of generality that $U(G) \cap N(x)=\{y_1\}$.  But now $(U(G)  -  \{y_1\}) \cup \{y_2\}$ is a different maximum open packing, which again contradicts the assumption that $G \in \mC$.
\end{proof}

%%%%%%%%%%%%%%%%%%%%
%%%%%%%%%%%%%%%%%%%%%%%%%%%%%%%%OPERATIONS%%%%%%%%%%%
%%%%%%%%%%%%%%%%%%%%%

We now define four operations on a graph $G$ that belongs to $\mC$.  These operations will be used in the remainder of the paper to build
additional graphs in $\mC$.  By \emph{appending} a path $P_n$ to a vertex $x \in V(G)$ we mean make $x$ adjacent to a leaf of a (new) path of order $n$. An edge $e\in E(G)$ is called a {\em cut edge} of $G$ if $e$ does not lie on any cycle, and hence the endvertices of $e$ belong to two distinct connected components in $G-e$.

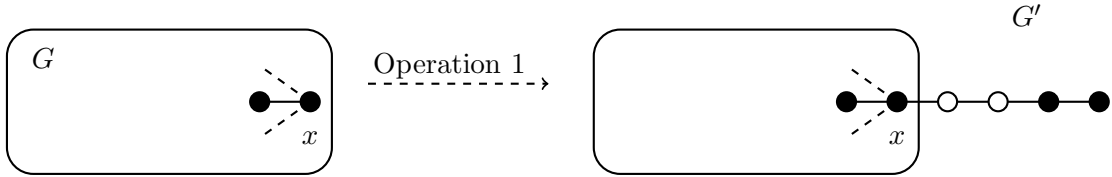
\begin{figure}[!ht]
\centering
\begin{tikzpicture}[scale=1.2, style=thick]

\coordinate(general)(0,0);
\begin{scope}[shift=(general), scale=0.8]
\def\vr{3pt / 0.8}
\draw[rounded corners=10pt] (0,0) rectangle (4.5,2);

\coordinate(x) at (4.2,1);
\coordinate(u) at (3.5,1);

\draw(4.2,0.5)node{$x$};
\draw(0.5,1.6)node{$G$};

\draw(x)--(u);
\draw[dashed](4.2, 1)--(3.5,1.5);
\draw[dashed](4.2, 1)--(3.5,0.5);
\draw(x)[fill=black] circle(\vr);
\draw(u)[fill=black] circle(\vr);
\end{scope}

%%%%%%%%
\draw[dashed,->](4, 1)--(6,1);
\draw(4.9,1.2)node{Operation 1};

%%%%%%%%

\coordinate(small) at (6.5,0);
\begin{scope}[shift=(small), scale=0.8]
\def\vr{3pt / 0.8}
\draw[rounded corners=10pt] (0,0) rectangle (4.5,2);

\coordinate(x) at (4.2,1);
\coordinate(u) at (3.5,1);
\coordinate(a) at (4.9,1);
\coordinate(b) at (5.6,1);
\coordinate(c) at (6.3,1);
\coordinate(d) at (7,1);

\draw(4.2,0.5)node{$x$};
\draw(6,2.2)node{$G'$};

\draw (u)--(x);
\draw (x)--(a);
\draw (a)--(b);
\draw (b)--(c);
\draw (c)--(d);
\draw[dashed](4.2, 1)--(3.5,1.5);
\draw[dashed](4.2, 1)--(3.5,0.5);
\draw(x)[fill=black] circle(\vr);
\draw(u)[fill=black] circle(\vr);
\draw(a)[fill=white] circle(\vr);
\draw(b)[fill=white] circle(\vr);
\draw(c)[fill=black] circle(\vr);
\draw(d)[fill=black] circle(\vr);

\end{scope}

\end{tikzpicture}

\caption{Operation 1; $G\in\mC$, $x$ and one of its neighbors are in $U(G)$;  vertices of the unique maximum open packing are black.}
\label{fig:not-3-2}
\end{figure}

%%%%%%%%%%%%%%%%%%%%%%%%%%%%%%%%%%%
%%%%%%%%%%%%%%%%%%%%%%%%%%%%%%%%%%%

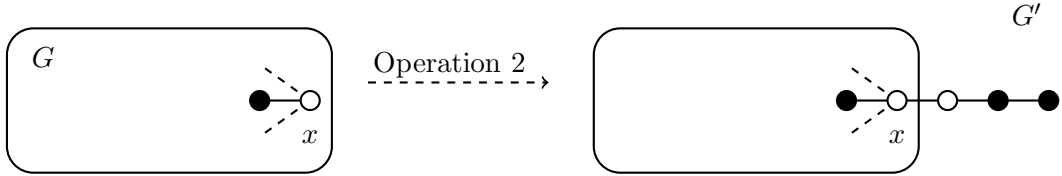
\begin{figure}[!ht]
\centering
\begin{tikzpicture}[scale=1.2, style=thick]

\coordinate(general)(0,0);
\begin{scope}[shift=(general), scale=0.8]
\def\vr{3pt / 0.8}
\draw[rounded corners=10pt] (0,0) rectangle (4.5,2);

\coordinate(x) at (4.2,1);
\coordinate(u) at (3.5,1);

\draw(4.2,0.5)node{$x$};
\draw(0.5,1.6)node{$G$};

\draw(x)--(u);
\draw[dashed](4.2, 1)--(3.5,1.5);
\draw[dashed](4.2, 1)--(3.5,0.5);
\draw(x)[fill=white] circle(\vr);
\draw(u)[fill=black] circle(\vr);
\end{scope}

%%%%%%%%
\draw[dashed,->](4, 1)--(6,1);
\draw(4.9,1.2)node{Operation 2};

%%%%%%%%

\coordinate(small) at (6.5,0);
\begin{scope}[shift=(small), scale=0.8]
\def\vr{3pt / 0.8}
\draw[rounded corners=10pt] (0,0) rectangle (4.5,2);

\coordinate(x) at (4.2,1);
\coordinate(u) at (3.5,1);
\coordinate(a) at (4.9,1);
\coordinate(b) at (5.6,1);
\coordinate(c) at (6.3,1);

\draw(4.2,0.5)node{$x$};
\draw(6,2.2)node{$G'$};

\draw (u)--(x);
\draw (x)--(a);
\draw (a)--(b);
\draw (b)--(c);
\draw[dashed](4.2, 1)--(3.5,1.5);
\draw[dashed](4.2, 1)--(3.5,0.5);
\draw(x)[fill=white] circle(\vr);
\draw(u)[fill=black] circle(\vr);
\draw(a)[fill=white] circle(\vr);
\draw(b)[fill=black] circle(\vr);
\draw(c)[fill=black] circle(\vr);

\end{scope}

\end{tikzpicture}

\caption{Operation 2; $G\in\mC$, $x\notin U(G)$, but one of its neighbors belongs to $U(G)$. }
\label{fig:not-3-2}
\end{figure}

%%%%

\begin{itemize}
\item {\underline{Operation 1:}} Let $x \in V(G)$ such that $x$ and some neighbor of $x$ both belong to $U(G)$.  Append a $P_4$ to $x$. The resulting graph $G'$ is said to be obtained by Operation $1$ at $x$.
\item {\underline{Operation 2:}} Let $x \in V(G)$ such that $x$ does not belong to $U(G)$, but some neighbor of $x$ is in $U(G)$.  Append a $P_3$ to $x$. The resulting graph $G'$ is said to be obtained by Operation $2$ at $x$.
\item {\underline{Operation 3:}} Let $x \in V(G)$ such that $x$ belongs to $U(G)$, some neighbor of $x$ belongs to $U(G)$, and there exists exactly one open packing of $G$ of size $\mop(G) - 1$ that does not contain $x$. (Equivalently, the third condition states that $G-x \in \mC$.) Append a $P_3$ to $x$ and then append another $P_3$ to $x$. The resulting graph $G'$ is said to be obtained by Operation $3$ at $x$.
\item{\underline{Operation 4:}} Let $xy$ be a cut edge of $G$ such that neither $x$ nor $y$ belongs to $U(G)$, but both $x$ and $y$
    have a neighbor that belongs to $U(G)$. Subdivide the edge $xy$ and add a leaf adjacent to the new vertex of the subdivided edge. The resulting graph $G'$ is said to be obtained by Operation $4$ at $xy$.
\end{itemize}

%%%%%
%%%%

\begin{figure}[!ht]
\centering
\begin{tikzpicture}[scale=1.2, style=thick]

\coordinate(general)(0,0);
\begin{scope}[shift=(general), scale=0.8]
\def\vr{3pt / 0.8}
\draw[rounded corners=10pt] (0,0) rectangle (4.5,2);
\draw[densely dotted,rounded corners=10pt] (0,0) rectangle (3.9,2);

\coordinate(x) at (4.2,1);
\coordinate(u) at (3.5,1);

\draw(4.2,0.5)node{$x$};
\draw(0.5,1.6)node{$G$};

\draw(x)--(u);
\draw[dashed](4.2, 1)--(3.5,1.5);
\draw[dashed](4.2, 1)--(3.5,0.5);
\draw(x)[fill=black] circle(\vr);
\draw(u)[fill=black] circle(\vr);
\end{scope}

%%%%%%%%
\draw[dashed,->](4, 1)--(6,1);
\draw(4.9,1.2)node{Operation 3};

%%%%%%%%

\coordinate(small) at (6.5,0);
\begin{scope}[shift=(small), scale=0.8]
\def\vr{3pt / 0.8}
\draw[rounded corners=10pt] (0,0) rectangle (4.5,2);
\draw[densely dotted,rounded corners=10pt] (0,0) rectangle (3.9,2);

\coordinate(x) at (4.2,1);
\coordinate(u) at (3.5,1);
\coordinate(a) at (4.8,0.8);
\coordinate(b) at (5.4,0.6);
\coordinate(c) at (6,0.4);
\coordinate(a') at (4.8,1.2);
\coordinate(b') at (5.4,1.4);
\coordinate(c') at (6,1.6);

\draw(4.2,0.5)node{$x$};
\draw(5.8,2.3)node{$G'$};

\draw (u)--(x);
\draw (x)--(a);
\draw (a)--(b);
\draw (b)--(c);
\draw[dashed](4.2, 1)--(3.5,1.5);
\draw[dashed](4.2, 1)--(3.5,0.5);

\draw(u)[fill=black] circle(\vr);
\draw(a)[fill=white] circle(\vr);
\draw(b)[fill=black] circle(\vr);
\draw(c)[fill=black] circle(\vr);
\draw (x)--(a');
\draw (a')--(b');
\draw (b')--(c');
\draw(a')[fill=white] circle(\vr);
\draw(b')[fill=black] circle(\vr);
\draw(c')[fill=black] circle(\vr);
\draw(x)[fill=white] circle(\vr);

\end{scope}

\end{tikzpicture}

\caption{Operation 3; both $G$ and $G-x$ belong to $\mC$.}
\label{fig:not-3-2}
\end{figure}
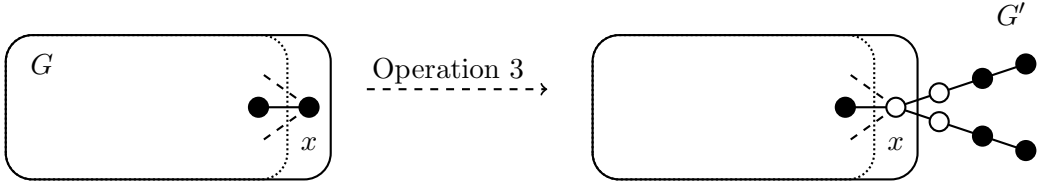

%%%%
%%%%

\begin{figure}[!ht]
\centering
\begin{tikzpicture}[scale=1.2, style=thick]

\coordinate(general)(0,0);
\begin{scope}[shift=(general), scale=0.8]
\def\vr{3pt / 0.8}

\draw[densely dotted,rounded corners=10pt] (0,0) rectangle (2.3,1.9);
\draw[densely dotted,rounded corners=10pt] (2.5,0) rectangle (4.6,1.9);

\coordinate(x) at (2,1);
\coordinate(y) at (2.8,1);
\coordinate(u) at (1.2,1);
\coordinate(v) at (3.6,1);

\draw(2,0.6)node{$x$};
\draw(2.8,0.6)node{$y$};
\draw(0.5,2.4)node{$G$};

\draw(x)--(y);
\draw(x)--(u);
\draw(y)--(v);
\draw[dashed](2.8, 1)--(3.4,1.5);
\draw[dashed](2.8, 1)--(3.4,0.5);
\draw[dashed](2, 1)--(1.4,1.5);
\draw[dashed](2, 1)--(1.4,0.5);
\draw(x)[fill=white] circle(\vr);
\draw(y)[fill=white] circle(\vr);
\draw(u)[fill=black] circle(\vr);
\draw(v)[fill=black] circle(\vr);
\end{scope}

%%%%%%%%
\draw[dashed,->](4, 1)--(6,1);
\draw(4.9,1.2)node{Operation 4};

%%%%%%%%

\coordinate(small) at (6.5,0);
\begin{scope}[shift=(small), scale=0.8]
\def\vr{3pt / 0.8}

\draw[densely dotted,rounded corners=10pt] (0,0) rectangle (2.3,1.9);
\draw[densely dotted,rounded corners=10pt] (3.2,0) rectangle (5.5,1.9);

\coordinate(x) at (2,1);
\coordinate(y) at (3.6,1);
\coordinate(z) at (2.8,1);
\coordinate(w) at (2.8,1.6);
\coordinate(u) at (1.2,1);
\coordinate(v) at (4.6,1);

\draw(2,0.6)node{$x$};
\draw(3.6,0.6)node{$y$};
\draw(5.3,2.3)node{$G'$};

\draw(x)--(z);
\draw(x)--(u);
\draw(y)--(z);
\draw(y)--(v);
\draw(z)--(w);
\draw[dashed](3.6, 1)--(4.2,1.5);
\draw[dashed](3.6, 1)--(4.2,0.5);
\draw[dashed](2, 1)--(1.4,1.5);
\draw[dashed](2, 1)--(1.4,0.5);
\draw(x)[fill=white] circle(\vr);
\draw(y)[fill=white] circle(\vr);
\draw(u)[fill=black] circle(\vr);
\draw(v)[fill=black] circle(\vr);
\draw(z)[fill=white] circle(\vr);
\draw(w)[fill=black] circle(\vr);

\end{scope}

\end{tikzpicture}

\caption{Operation 4; $G\in\mC$, and $xy$ is a cut edge in $G$.}
\label{fig:not-3-2}
\end{figure}
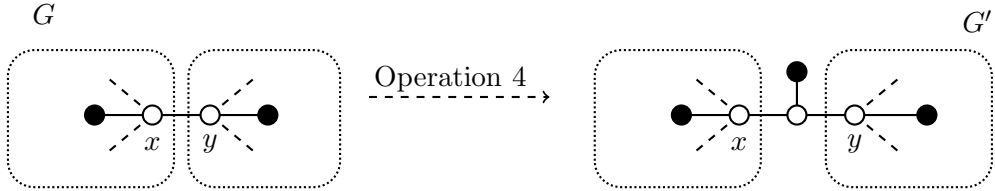

%%%%%%%%%%%%%%%%%%%%%%%%%%%%%%%%%%%
%%%%%%%%%%%%%%%%%%%%%%%%%%%%%%%%%%%

We first show that if $G \in \mC$ and $G'$ is obtained from $G$ by performing one of the above operations, then  $G' \in\mC$.

\begin{lem}\label{lem:op1}
Let $G \in \mC$ and $x \in U(G)$ such that there exists $y \in N(x) \cap U(G)$. If $G'$ is the graph obtained from $G$ by Operation $1$ at $x$, then $G' \in \mC$.
\end{lem}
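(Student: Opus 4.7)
Label the appended path $v_1v_2v_3v_4$ so that $v_1$ is the new neighbor of $x$, and let $y\in N(x)\cap U(G)$ be the neighbor of $x$ guaranteed by the hypothesis. The plan is to identify the explicit candidate
\[
U^{\ast}:=U(G)\cup\{v_3,v_4\}
\]
and prove, in order, that (i) $U^{\ast}$ is an open packing of $G'$, (ii) $\mop(G')=\mop(G)+2$, so $U^{\ast}$ is maximum, and (iii) no other open packing of $G'$ attains this size.

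Step (i) is a direct verification: the only edge between $V(G)$ and the new path is $xv_1$, so $N_{G'}(v_3)=\{v_2,v_4\}$ and $N_{G'}(v_4)=\{v_3\}$ meet neither each other nor any $N_{G'}(u)$ with $u\in U(G)$, while the remaining pairs are disjoint because $U(G)$ is already an open packing of $G$. For (ii), take an arbitrary open packing $S\subseteq V(G')$ and split it as $S=S_1\cup S_2$ with $S_1=S\cap V(G)$ and $S_2=S\cap\{v_1,v_2,v_3,v_4\}$. Since only $N(x)$ grows in the passage from $G$ to $G'$ (it gains $v_1$), we have $N_G(u)\subseteq N_{G'}(u)$ for every $u\in V(G)$, and hence $S_1$ is an open packing of $G$, giving $|S_1|\le\mop(G)$. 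Analogously, $N_{P_4}(v_i)\subseteq N_{G'}(v_i)$ for each $i$, so $S_2$ is an open packing of the induced $P_4$ on $\{v_1,\dots,v_4\}$, whose open packing number is $2$. Thus $|S|\le\mop(G)+2$, and combining with (i) yields $\mop(G')=\mop(G)+2$.

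For uniqueness (iii), any $S$ with $|S|=\mop(G)+2$ must satisfy $|S_1|=\mop(G)$ and $|S_2|=2$. Uniqueness of $U(G)$ in $G$ forces $S_1=U(G)$, so $\{x,y\}\subseteq S$. The $2$-element open packings of $P_4$ are $\{v_1,v_2\}$, $\{v_1,v_4\}$, $\{v_2,v_3\}$, and $\{v_3,v_4\}$. If $v_1\in S_2$, then $x\in N_{G'}(v_1)\cap N_{G'}(y)$, contradicting the open packing property; if $v_2\in S_2$, then $v_1\in N_{G'}(v_2)\cap N_{G'}(x)$, again a contradiction. Only $S_2=\{v_3,v_4\}$ survives, forcing $S=U^{\ast}$. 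The one subtlety is that both hypotheses are essential in (iii): $y\in U(G)$ is what eliminates $v_1\in S_2$, while $x\in U(G)$ is what eliminates $v_2\in S_2$. Dropping either would leave a rival maximum open packing and destroy uniqueness; beyond that, every step is local and routine.
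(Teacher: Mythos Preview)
Your proof is correct and follows essentially the same approach as the paper: split any maximum open packing of $G'$ into its $V(G)$-part and its $P_4$-part, bound each by $\mop(G)$ and $2$ respectively, use uniqueness of $U(G)$ to force the first part to equal $U(G)$, and then use $x,y\in S$ to pin down the second part as $\{v_3,v_4\}$. Your version is slightly more explicit in enumerating the $2$-element open packings of $P_4$ and ruling each out, whereas the paper compresses that step into a single line, but the argument is the same.
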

\begin{proof}
Label the vertices of the appended $P_4$ $a,b,c$ and $d$ so that $xa \in E(G')$. We let $\mop(G) = k$. It follows that $\mop(G') \ge k+2$, since $U(G)\cup\{c,d\}$ is an open packing of $G'$. Let $S$ be a maximum open packing of $G'$. Note that $|S\cap\{a,b,c,d\}|\le 2$, which implies that $S\cap V(G)$ is an open packing of $G$ of size at least $k$. Hence $|S\cap\{a,b,c,d\}|=2$ and $S\cap V(G)=U(G)$. Hence $x,y\in S$, which implies that $S\cap\{a,b,c,d\}=\{c,d\}$, and $S$ is unique. Therefore, $G'\in \mC$.
\end{proof}

 \begin{lem}\label{lem:op2}
 Let $G \in \mC$ and $x\not\in U(G)$ such that $x$ has a neighbor $y\in U(G)$. If $G'$ is the graph obtained from $G$ by Operation $2$ at $x$, then $G' \in \mC$.
 \end{lem}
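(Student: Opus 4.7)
The plan is to mirror the structure of the proof of Lemma~\ref{lem:op1}. Label the appended path so that $a,b,c$ are the new vertices with edges $xa,\,ab,\,bc$ in $G'$, and set $k=\mop(G)$.

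First I would exhibit a large open packing of $G'$. The set $U(G)\cup\{b,c\}$ is an open packing of $G'$: the vertices of the appended $P_3$ only introduce the neighbors $\{a,b\}$ and $\{b\}$ (for $b$ and $c$ respectively), and since $a,b,c\notin V(G)$ and $x\notin U(G)$, none of these neighborhoods meets $N_{G'}(u)$ for any $u\in U(G)$. (In particular $N_{G'}(y)\cap N_{G'}(b)=\emptyset$ because $N_{G'}(y)\subseteq V(G)\cup\{a\}$ only if $y=x$, which is not the case.) Hence $\mop(G')\ge k+2$.

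Next I would take an arbitrary maximum open packing $S$ of $G'$ and establish the two bounds $|S\cap V(G)|\le k$ and $|S\cap\{a,b,c\}|\le 2$. The first holds because $S\cap V(G)$ is an open packing of $G$ (new edges created by the operation only touch $a$, which lies outside $V(G)$). The second holds because $a$ and $c$ have a common neighbor $b$, so they cannot both lie in $S$. Combining with $|S|\ge k+2$ forces equality in both, so $|S\cap V(G)|=k$ and $|S\cap\{a,b,c\}|=2$.

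Uniqueness of the maximum open packing in $G$ now yields $S\cap V(G)=U(G)$, hence $y\in S$. The key observation, and the step I consider the main (and only real) obstacle, is to rule out $a\in S$: since $x\in N_{G'}(y)\cap N_{G'}(a)$, having $y\in S$ forces $a\notin S$. Together with $|S\cap\{a,b,c\}|=2$, this gives $S\cap\{a,b,c\}=\{b,c\}$, and therefore $S=U(G)\cup\{b,c\}$ is the unique maximum open packing of $G'$. Hence $G'\in\mC$, as required.
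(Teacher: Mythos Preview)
Your proof is correct and follows essentially the same approach as the paper's: exhibit $U(G)\cup\{b,c\}$ to get $\mop(G')\ge k+2$, bound $|S\cap V(G)|\le k$ and $|S\cap\{a,b,c\}|\le 2$ to get equality, then use uniqueness of $U(G)$ to force $S=U(G)\cup\{b,c\}$. If anything, your argument is slightly more explicit than the paper's in spelling out why $a\notin S$ (via the common neighbor $x$ of $y$ and $a$), a step the paper compresses into a single clause.
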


 \begin{proof}
 Label the vertices of the appended $P_3$ $a,b,$ and $c$ so that $xa \in E(G')$. We let $\mop(G) = k$. It follows that $\mop(G') \ge k+2$ for $U(G) \cup \{b, c\}$ is an open packing of $G'$. If $M$ is a $\mop(G')$-set, then $|M \cap V(G)| \le k$, which implies that $\mop(G') \le k+2$. Therefore, $\mop(G') = k+2$. To see that $G'\in \mC$, suppose there exists another $\mop(G')$-set $T$ different from $U(G) \cup \{b,c\}$. We may assume $|T\cap \{a, b, c \}|\le 1$ for otherwise $T\cap V(G)$ is a $\mop(G)$-set different from $U(G)$. However, this implies $|T\cap V(G)| \ge k+1$, contradicting the assumption that $\mop(G)=k$. Therefore, no such $T$ exists and $G' \in \mC$.
 \end{proof}

 \begin{lem}\label{lem:op3}
 Let $G\in \mC$ and $x\in U(G)$ such that $N(x) \cap U(G) \ne \emptyset$. Moreover, suppose $G$  has exactly one open packing of cardinality $\mop(G) - 1$ that does not contain $x$. If $G'$ is the graph obtained from $G$ by Operation $3$ at $x$, then $G' \in \mC$.
 \end{lem}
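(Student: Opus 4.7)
The plan is to label the two appended paths as $a_1b_1c_1$ and $a_2b_2c_2$, with $xa_i\in E(G')$ for $i\in\{1,2\}$, set $k=\mop(G)$, and show that $\mop(G')=k+4$ with the unique maximum open packing being $U(G)\cup\{b_1,c_1,b_2,c_2\}$. The lower bound $\mop(G')\ge k+4$ is immediate, since this set is easily verified to be an open packing of $G'$ (the new vertices $a_i,b_i,c_i$ lie outside $V(G)$, and the neighborhoods of $b_i$ and $c_i$ are confined to the respective branch).

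For the upper bound and uniqueness, I would take an arbitrary maximum open packing $S$ of $G'$ and analyze its interaction with each appended $P_3$. The open-packing constraints on a hanging $P_3$ give $|S\cap\{a_i,b_i,c_i\}|\le 2$, with only the pairs $\{a_i,b_i\}$ and $\{b_i,c_i\}$ admissible, because $N_{G'}(a_i)\cap N_{G'}(c_i)=\{b_i\}$. Moreover $a_1$ and $a_2$ cannot both lie in $S$, since they share $x$ as a common neighbor. A separate easy check shows that $S\cap V(G)$ is an open packing of $G$: for two vertices of $V(G)$, their open neighborhoods in $G'$ agree with those in $G$ outside of $\{a_1,a_2\}$, and those added vertices are not in $V(G)$. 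Hence $|S\cap V(G)|\le k$, and combined with the branch bounds this yields $|S|\le k+4$. So $\mop(G')=k+4$, $|S\cap V(G)|=k$, and $|S\cap\{a_i,b_i,c_i\}|=2$ for each $i$; in particular $S\cap V(G)=U(G)$ by the assumption $G\in\mC$.

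The final step is to eliminate the configurations in which some $a_i$ belongs to $S$. If $a_i\in S$ and $y\in N(x)\cap U(G)$ is the neighbor provided by the hypothesis, then $x\in N_{G'}(a_i)\cap N_{G'}(y)$, contradicting that $S$ is an open packing (since $y\in S\cap V(G)=U(G)$). Therefore $a_1,a_2\notin S$, forcing $S\cap\{a_i,b_i,c_i\}=\{b_i,c_i\}$ for each $i$, and $S=U(G)\cup\{b_1,c_1,b_2,c_2\}$ is uniquely determined.

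The only mildly subtle point I foresee is the bookkeeping around the fact that $x$'s neighborhood changes between $G$ and $G'$; the remedy is simply to notice that the two newly added neighbors $a_1,a_2$ are not vertices of $G$, so no open-packing condition within $V(G)$ is altered. I also note that the extra hypothesis on the existence of exactly one open packing of $G$ of size $\mop(G)-1$ not containing $x$ (equivalently, $G-x\in\mC$) does not appear to be needed in this closure direction of Operation~3; presumably it is used later, when reversing the operation in the characterization of trees in $\mC$.
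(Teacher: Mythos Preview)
Your argument has a genuine gap at the very first step: the set $U(G)\cup\{b_1,c_1,b_2,c_2\}$ is \emph{not} an open packing of $G'$. Recall that $x\in U(G)$ by hypothesis, and in $G'$ the vertex $x$ has acquired $a_1$ and $a_2$ as new neighbors. Since $N_{G'}(b_i)=\{a_i,c_i\}$, we get $a_i\in N_{G'}(x)\cap N_{G'}(b_i)$, so $x$ and $b_i$ cannot both lie in an open packing. (You even flagged the change in $N(x)$ as the ``mildly subtle point,'' but then only checked that it does not disturb conditions among vertices of $V(G)$; it does disturb the condition between $x$ and each $b_i$.) Consequently your lower bound $\mop(G')\ge k+4$ fails, and in fact $\mop(G')=k+3$: the correct candidate is $(U(G)\setminus\{x\})\cup\{b_1,c_1,b_2,c_2\}$, as in the paper.

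This is also why the extra hypothesis you propose to discard is essential. Once $x$ must be removed, the intersection of a maximum open packing $M$ of $G'$ with $V(G)$ is an open packing of $G$ of size $k-1$ not containing $x$, and uniqueness of $M$ comes precisely from the assumption that $G$ has exactly one such packing (equivalently, $G-x\in\mC$). The paper's proof carries out this analysis: it first shows $\mop(G')=k+3$, then argues that in any maximum open packing $M$ one has $x\notin M$, rules out $a_i\in M$ (using both the uniqueness of $U(G)$ and the uniqueness of the size-$(k-1)$ packing avoiding $x$), forces $\{b_1,b_2,c_1,c_2\}\subset M$, and concludes $M\cap V(G)=U(G)\setminus\{x\}$. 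Your outline can be repaired along these lines, but the value $k+4$ and the claim that the extra hypothesis is unused must both be abandoned.
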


 \begin{proof}
 Label the vertices of the two appended paths of order $3$ as $a,b,$ and $c$, and $a',b'$ and $c'$, respectively, where $xa \in E(G')$ and $xa'\in E(G')$. We let $\{y \} = U(G) \cap N(x)$ and we assume that $\mop(G) = k$. We claim that $\mop(G') = k+3$. Certainly $(U(G) -  \{x\})\cup \{b, b', c, c'\}$ is an open packing of $G'$, and hence $\mop(G') \ge k+3$. Suppose that $\mop(G') \ge k+4$. Let $T$ be an open packing in $G'$ of cardinality $k+4$. If $|T \cap \{a, a', b, b', c, c'\}| \le 3$, then $|T\cap V(G)| \ge k+1$, which contradicts our assumption that $\mop(G) = k$. Thus, $|T \cap \{a, a', b, b', c, c'\}| = 4$. If $a \in T$, then $T\cap V(G)$ is another $\mop(G)$-set different from $U(G)$ since $y \not\in T \cap V(G)$. Therefore, $\{b, b', c, c'\} \subset T$ and $T\cap V(G)$ is a $\mop(G)$-set different from $U(G)$ as $x \not\in T\cap V(G)$. Therefore, no such set $T$ exists and $\mop(G') = k+3$.

 Let $M$ be a maximum open packing of $G'$. Notice that $|M \cap \{a,a',b, b', c, c'\}| \ge 3$ for otherwise $|M \cap V(G)| \ge k+1$, which is a contradiction. Thus, $\{b, b'\} \cap M \ne \emptyset$. This implies that $x \notin M$. Now, if $a$ or $a'$ belongs to $M$, then $y\notin M$, which implies that $M\cap V(G)$ is an open packing of $G$ of size at least $k-1$ such that $M\cap \{x,y\}=\emptyset$. This contradicts the assumptions on $G$ since the open packing of $G$ of size $\mop(G)$ is unique and since there is only one open packing of $G$ of size $\mop(G)-1$ that does not contain $x$. Thus, neither $a$ nor $a'$ is in $M$. This implies that $\{b, b', c, c'\} \subset M$, for otherwise $M$ is not maximum. Therefore, $|M \cap V(G)| = k-1$ and hence  $M\cap V(G)$ is unique, which implies that $M$ is unique. Thus, $G' \in \mC$.
 \end{proof}

 \begin{lem}\label{lem:op4}
 Let $G\in \mC$ and let $xy$ be a cut edge in $G$ such that neither $x$ nor $y$ belongs to $U(G)$ yet $N(x) \cap U(G) \ne \emptyset$ and $N(y) \cap U(G) \ne \emptyset$.
 If $G'$ is the graph obtained from $G$ by Operation $4$ at $xy$, then $G' \in \mC$.
 \end{lem}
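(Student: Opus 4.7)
The plan is to follow the template of Lemmas~\ref{lem:op1}--\ref{lem:op3}: first establish that $\mop(G') = k + 1$ where $k = \mop(G)$, and then show that the only open packing attaining this size is $U(G) \cup \{w\}$. Let $A$ and $B$ denote the components of $G - xy$ containing $x$ and $y$, respectively. Two elementary observations will be used repeatedly. Since any two vertices of $U(G)$ sharing a common neighbor would violate the open packing condition, the vertex $p \in N_G(x) \cap U(G)$ is unique and, analogously, $q \in N_G(y) \cap U(G)$ is unique; moreover $p \in A$ and $q \in B$, so $p \ne q$. In $G'$, the vertex $z$ is a common neighbor of each pair from $\{x, y, w\}$, so at most one of $x, y$ lies in any open packing of $G'$, and whenever $w$ belongs to an open packing $M$ we must have $x, y \notin M$.

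The lower bound is direct: $U(G) \cup \{w\}$ is an open packing of $G'$ of size $k + 1$, because neighborhoods in $G'$ of vertices of $U(G)$ agree with those in $G$, and $N_{G'}(w) = \{z\}$ is disjoint from every $N_G(u)$ with $u \in U(G)$. For the upper bound and uniqueness I would let $M$ be a maximum open packing of $G'$ and split into cases according to $M \cap \{z, w\}$. The routine case is $z \notin M,\, w \in M$: then $x, y \notin M$, so $M - \{w\}$ is an open packing of $G$ of size at most $k$, with equality forcing $M - \{w\} = U(G)$ by uniqueness, hence $M = U(G) \cup \{w\}$. Analogous, tighter bookkeeping when $z \in M,\, w \notin M$ shows $|M| \le k$ in that case.

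There are two nontrivial cases. First, when $z, w \notin M$ and (without loss of generality) $x \in M$, $y \notin M$, a direct count forces $M = U(G) \cup \{x\}$ provided $|M| = k + 1$, and I would rule this possibility out by showing that some $u \in U(G)$ must share a common neighbor with $x$ in $A$. If no such $u$ existed, then $(U(G) - \{q\}) \cup \{x\}$ would be an open packing of $G$ of size $k$---the only possible conflict between $x$ and a vertex of $U(G) - \{q\}$ would be via a common neighbor in $A$ (excluded by assumption) or via $y$ (excluded because $q$ is the unique neighbor of $y$ inside $U(G)$)---contradicting the uniqueness of $U(G)$ since $x \notin U(G)$. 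The second, and the main obstacle, is the case $\{z, w\} \subseteq M$: then $x, y \notin M$ and the constraint imposed by $z$ forces $S := M \cap V(G) \subseteq V(G) - (\{x, y\} \cup N_G(x) \cup N_G(y))$, with $|S| = k - 1$ when $|M| = k + 1$. Here the cut-edge property is essential. Writing $S = S_A \cup S_B$ and $U(G) = U_A \cup U_B$ according to the decomposition $V(G) = A \cup B$, the set $S_A \cup U_B$ is an open packing of $G$, because any common neighbor of a vertex of $S_A$ and a vertex of $U_B$ would have to lie in $A \cap B = \emptyset$. Since $p \in U_A$ is excluded from $S$ (as $p \in N_G(x)$), we have $S_A \ne U_A$, whence the uniqueness of $U(G)$ gives $|S_A| + |U_B| < k$ and therefore $|S_A| < |U_A|$; symmetrically $|S_B| < |U_B|$. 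Summing yields $|S| \le k - 2$, the desired contradiction.
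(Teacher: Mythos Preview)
Your proposal is correct and follows essentially the same strategy as the paper: both proofs exploit the cut-edge decomposition into the two sides $A,B$ (the paper's $L,R$) and use a ``mix-and-match'' argument---your $S_A\cup U_B$ is exactly the paper's $(S\cap V(L))\cup (U(G)\cap V(R))$---together with the presence of $p\in U_A\setminus S_A$ and $q\in U_B\setminus S_B$ to force the strict inequalities $|S_A|<|U_A|$ and $|S_B|<|U_B|$, ruling out the subdivided vertex from any maximum open packing. The only organizational difference is that the paper first proves $\mop(G')=k+1$ and then eliminates $x,y,b$ from an arbitrary $(k{+}1)$-packing in sequence (handling $x\in S$ via the bounds $|S\cap V(L)|\le\ell$, $|S\cap V(R)|\le r$), whereas you fold the upper bound and uniqueness into a single case split on $M\cap\{z,w\}$ and dispose of the possibility $x\in M$ by the swap $(U(G)-\{q\})\cup\{x\}$; both routes are equally valid.
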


 \begin{proof}
 We let $b$ represent the vertex of the subdivided edge and let $a$ be the leaf adjacent to $b$. We shall assume that $\mop(G) = k$. We also let $\{w\} = N(x)\cap U(G)$ and $\{z\} = N(y) \cap U(G)$. Note that $\mop(G') \ge k+1$ since $U(G) \cup \{a\}$ is an open packing of $G'$. Suppose there exists an open packing $M$ of $G'$ of size $k+2$. If $|M \cap \{x, y, a, b\}| = 2$, then clearly $b\in M$, but then $M\cap V(G)$ is a different open packing from $U(G)$ of size $k$, which is a contradiction. Therefore, $|M \cap \{x, y, a, b\}| \le 1$.
This implies that $|M\cap (V(G')-\{x, y, a, b\})|\ge k+1$. Note that $M\cap (V(G')-\{x, y, a, b\})$ is an open packing also in $G$, which contradicts $\mop(G)=k$.
 Therefore, no such $M$ exists and we conclude that $\mop(G') = k+1$. Clearly, $T = U(G) \cup \{a\}$ is an open packing of size $\mop(G')$.

To see that $G' \in \mC$, suppose $S$ is a $\mop(G')$-set of cardinality $k+1$. Let $L$ be the component of $G-y$ containing $x$ and let $R$ be the component of $G - x$ containing $y$. Note that $U(G) \cap V(L)$ is the unique maximum open packing of $L$ that does not contain $x$, and $U(G) \cap V(R)$ is the unique maximum open packing of $R$ that does not contain $y$.
Let $|V(L) \cap U(G)| = \ell$ and $|V(R) \cap U(G)| = r$.
We claim that $|S \cap V(L)| \le \ell$. Otherwise, $(S\cap V(L))\cup (U(G)\cap (V(R)-\{z\}))$ is an open packing of $G$ of cardinality at least $k$ different from $U(G)$, a contradiction. In a similar way we can prove that $|S\cap V(R)|\le r$.

Suppose $x\in S$. Then $\{a,y\}\cap S=\emptyset$. We have two cases with respect to $b$ belonging to $S$. If $b\notin S$, then $|S\cap V(R)|\ge r+1$, which contradicts the above observation. On the other hand, if $b\in S$, then $S-\{b\}$ is an open packing of $G$ of size $k$ different from $U(G)$, which contradicts the assumption that $G\in \mC$. This implies that $x\notin S$. In a similar way one proves that $y\notin S$.

Finally, we claim that $b\notin S$. Suppose $b\in S$. From this we derive that $N(y)\cap S=\{b\}=N(x)\cap S$. Hence, $S\cap V(L)$ is an open packing of $L$ that does not contain $x$. Since $S\cap V(L)$ also does not contain $w$, we infer that $|S\cap V(L)|<\ell$, because there is just one open packing of size $\ell$ in $L$ that does not contain $x$, and it contains $w$. Similarly, $|S\cap V(R)|<r$. But now, we infer that $k+1=|S|\le (\ell-1)+(r-1)+2\le k$, a contradiction, which yields that $b\notin S$. Since $S$ is a maximum open packing of $G'$, we infer that $|V(L) \cap S| = \ell$, $|V(R) \cap S| = r$ and $a\in S$. This readily implies that $S=T$.

Therefore, $U(G) \cup \{a\}$ is the only $\mop(G')$-set, and hence $G'\in \mC$.
\end{proof}

%%%%%%%%%%%%%%%%%%%%%%%%%%%%
%%%%%%%%%%%%%%%%%% REVERSED DIRECTION
%%%%%%%%%%%%%%%%%%%%%%%%%%%%%%%

From this point on we concentrate on the class of trees in $\mC$. For this purpose we construct the class $\mO$ as follows.  A tree $T$ is in $\mO$ if and only if $T=P_1$, $T=P_2$ or $T$ can be constructed from $P_2$ by a finite sequence of steps, each one of which is Operation $1$, Operation $2$, Operation $3$, or Operation $4$.

Let $\mT$ be the class of all trees that have a unique maximum open packing. Lemmas~\ref{lem:op1},~\ref{lem:op2},~\ref{lem:op3} and~\ref{lem:op4} show that $\mO\subseteq\mT$. To prove $\mT\subseteq \mO$ we will use induction on the order of a tree. We will show that $T'\in \mT$ implies that $T'$ can be obtained from a smaller tree, which is in $\mT$, by using one of the four operations. The induction hypothesis implies that this smaller tree is also in $\mO$, which finally implies that $T'\in\mO$.

The basis of induction with trees $P_1$ and $P_2$ is clear by definition of $\mO$.
Let $T'$ be a tree in $\mT$ with $|V(T')|>2$.

If $\Delta(T')=2$, then $T'$ is a path, and the following observation is not hard to see (its proof follows by using Lemma~\ref{lem:claim0}).

\begin{lem}\label{lem:paths} If $T'=P_n$, $n\ge 2$, then $T'\in \mT$ if and only if $n\equiv 2\pmod 4$.
\end{lem}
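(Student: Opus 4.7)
The plan is to reduce the open packing problem on $P_n$ to two independent maximum-independent-set problems via the parity of vertex indices. Label $V(P_n)=\{v_1,\dots,v_n\}$ with edges $v_iv_{i+1}$. The first step is the simple observation that $N(v_i)\cap N(v_j)\neq\emptyset$ if and only if $|i-j|=2$. In particular, two adjacent vertices remain compatible in an open packing, while two vertices at distance exactly $2$ do not; so $S\subseteq V(P_n)$ is an open packing if and only if no two of its vertices lie at distance $2$ in $P_n$.

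Next I would split any open packing as $S=S_{\mathrm{odd}}\cup S_{\mathrm{even}}$ according to the parity of the index. Two vertices of the same parity are at distance $2$ in $P_n$ precisely when they are consecutive in the natural subpath induced on the odd-indexed (respectively even-indexed) vertices. Thus the constraint decouples across parities, and one obtains
\[
\mop(P_n)\;=\;\alpha(P_{\lceil n/2\rceil})+\alpha(P_{\lfloor n/2\rfloor}).
\]
Moreover, since maximality of the open packing forces maximality in each parity class separately, the maximum open packing of $P_n$ is unique if and only if each of these two path-independence problems has a unique solution.

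The last ingredient is the standard fact (easily verified in one line, and the content of what a lemma such as \ref{lem:claim0} would encode): $P_m$ has a unique maximum independent set if and only if $m$ is odd, in which case that unique set consists of the vertices at odd positions. A brief case analysis then finishes: if $n$ is odd, then $\lceil n/2\rceil$ and $\lfloor n/2\rfloor$ have opposite parities, so one of the two subpaths has even order and uniqueness fails; if $n$ is even, both subpaths have order $n/2$, and uniqueness holds exactly when $n/2$ is odd, i.e.\ when $n\equiv 2\pmod 4$.

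I do not foresee any genuine obstacle; the only point that needs care is the decoupling step, namely that an open packing and a maximum open packing both respect the parity decomposition. That, however, is immediate from the distance-$2$ characterization in the first step, so the argument is really a two-line reduction followed by a routine four-case check modulo~$4$.
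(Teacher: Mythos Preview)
Your argument is correct, but it proceeds quite differently from the paper's intended route. The paper merely remarks that the lemma ``follows by using Lemma~\ref{lem:claim0}'', i.e.\ by the length-$4$ reduction: strip the last four vertices off the end of $P_n$ to get $P_{n-4}$, show (exactly as in the proof of Lemma~\ref{lem:claim0}) that $P_n\in\mT$ if and only if $P_{n-4}\in\mT$, and then inspect the four base cases $P_2,P_3,P_4,P_5$. Your approach instead decouples the open-packing condition via the parity of indices, reducing to two independent maximum-independent-set problems on $P_{\lceil n/2\rceil}$ and $P_{\lfloor n/2\rfloor}$, and then invokes the elementary fact that $P_m$ has a unique maximum independent set precisely when $m$ is odd. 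This is a clean and self-contained reduction that also yields the explicit formula $\mop(P_n)=\alpha(P_{\lceil n/2\rceil})+\alpha(P_{\lfloor n/2\rfloor})$; the paper's route, by contrast, ties the path case into the same Operation~1 machinery used for general trees. One small correction: your parenthetical that the uniqueness-of-MIS fact for $P_m$ is ``the content of what a lemma such as~\ref{lem:claim0} would encode'' is off---Lemma~\ref{lem:claim0} is about peeling four vertices from a pendant path in a tree with a degree-$\ge 3$ vertex, not about independent sets in paths---but this does not affect the validity of your argument.
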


Hence, the case when $T'$ is a path is settled ($T'\in \mT$ implies $T'\in \mO$). Now, let $\Delta(T')\ge 3$. We define a tree $L(T')$ as follows. Vertices of $L(T')$ are vertices of $T'$ of degree at least $3$, and two vertices $p,q$ of $L(T')$ are adjacent if the internal vertices in the unique path from $p$ to $q$ in $T'$ are of degree $2$. In each of the subsequent lemmas we consider vertex $x\in V(T')$, which is a leaf in $L(T')$. In other words, $\deg_{T'}(x)\ge 3$, and all components of $T'-x$, with only one possible exception, are paths.

\begin{lem}\label{lem:claim0} Let $T' \in \mT$ and let $x$ be a leaf in $L(T')$. Suppose there is a path $x=u_0,u_1,\ldots,u_k$ in $T'$, where $\deg_{T'}(u_i)=2$ for $i\in [k-1]$, $\deg_{T'}(u_k)=1$, and $k\ge 4$. The tree $T=T'-\{u_{k-3},\ldots,u_k\}$ belongs to $\mT$. In addition, $T'$ is obtainable from $T$ by Operation 1 at $u_{k-4}$.
\end{lem}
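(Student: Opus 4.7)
The plan is to first pin down the structure of $U(T')$ on the last four vertices of the pendant path, then argue $T\in\mT$ via a lift/restrict argument, and finally verify the two hypotheses of Operation~1 at $u_{k-4}$ in $T$.

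By Lemma~\ref{lem:leaf}, $u_k\in U(T')$. I would then show $u_{k-1}\in U(T')$ by contradiction. Assume $u_{k-1}\notin U(T')$; then $u_{k-2}\notin U(T')$, because $u_{k-2}$ and $u_k$ would share the common neighbor $u_{k-1}$. Now two cases: if $u_{k-3}\notin U(T')$, then $U(T')\cup\{u_{k-1}\}$ is still an open packing---its new neighborhood $\{u_{k-2},u_k\}$ meets no $N(v)$ for $v\in U(T')$---contradicting maximality of $U(T')$; if instead $u_{k-3}\in U(T')$, then $(U(T')\setminus\{u_{k-3}\})\cup\{u_{k-1}\}$ is an open packing of the same cardinality distinct from $U(T')$, contradicting uniqueness. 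Hence $u_{k-1}\in U(T')$, and the degree-2 open-packing constraint that $u_{i-1}$ and $u_{i+1}$ cannot coexist in any open packing (for $i\in[k-1]$) forces $u_{k-2},u_{k-3}\notin U(T')$.

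Set $U^{*}=U(T')\setminus\{u_{k-1},u_k\}\subseteq V(T)$, an open packing of $T$ of cardinality $\rho^o(T')-2$. For maximality in $T$: any open packing $S$ of $T$ extends to the open packing $S\cup\{u_{k-1},u_k\}$ of $T'$, since $u_{k-3}\notin V(T)$ is the only $T'$-vertex outside $\{u_{k-2},u_{k-1},u_k\}$ adjacent to $u_{k-2}$, so no common-neighbor conflict can arise between the added vertices and $S$. This yields $|S|\le\rho^o(T')-2$, so $\rho^o(T)=\rho^o(T')-2$ with witness $U^{*}$. Uniqueness is analogous: a second maximum open packing of $T$ would lift to a distinct maximum open packing of $T'$, contradicting $T'\in\mT$. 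Hence $T\in\mT$ with $U(T)=U^{*}$.

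It remains to verify that (i) $u_{k-4}\in U(T)$ and (ii) some $T$-neighbor of $u_{k-4}$ lies in $U(T)$. Once $k\ge 5$, removal of $u_{k-3}$ makes $u_{k-4}$ a leaf of $T$, so Lemma~\ref{lem:leaf} applied to $T\in\mT$ gives (i). For (ii) I would push the swap argument of the first paragraph one step further along the path: if $u_{k-5}\notin U(T)$, then $(U(T)\setminus\{u_{k-4}\})\cup\{u_{k-5}\}$ is a rival maximum open packing of $T$ unless the presence of $u_{k-7}$ in $U(T)$ blocks the swap, in which case $(U(T)\setminus\{u_{k-7}\})\cup\{u_{k-5}\}$ does, violating uniqueness of $U(T)$. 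The main obstacle is the boundary situation in which $u_{k-5}$ (or, for $k=4$, $u_{k-4}$ itself) coincides with $x$: then the additional pendant paths attached at $x$ and the unique non-path component of $T'-x$ complicate the swap, and one must invoke the hypothesis that $x$ is a leaf of $L(T')$ together with Corollary~\ref{cor:no11} and Lemmas~\ref{lem:no21},~\ref{lem:no22} to constrain those branches and force the required configuration.
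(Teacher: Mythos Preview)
Your first two steps---determining $\{u_{k-1},u_k\}\subset U(T')$, $u_{k-2},u_{k-3}\notin U(T')$, and then lifting/restricting to show $T\in\mT$ with $U(T)=U(T')\setminus\{u_{k-1},u_k\}$---are correct and essentially match the paper's argument.

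The gap is in your verification of the two hypotheses of Operation~1 at $u_{k-4}$.  You try to prove $u_{k-4}\in U(T)$ and $N_T(u_{k-4})\cap U(T)\ne\emptyset$ by working \emph{inside $T$} after the fact, and you explicitly leave the boundary cases (in particular $k=4$, where $u_{k-4}=x$ is not a leaf of $T$) unresolved, gesturing at Corollary~\ref{cor:no11} and Lemmas~\ref{lem:no21}--\ref{lem:no22} without saying how they would apply.  Moreover, your swap $(U(T)\setminus\{u_{k-4}\})\cup\{u_{k-5}\}$ is not the right move: since $u_{k-4}$ is a leaf of $T$, it never obstructs $u_{k-5}$, so if $u_{k-5}\notin U(T)$ and $u_{k-7}\notin U(T)$ one should simply \emph{add} $u_{k-5}$ (contradicting maximality), not swap; and the recursion to $u_{k-7}$ tacitly needs $k\ge 7$.

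The paper avoids all of this by proving both hypotheses directly \emph{in $T'$}, via two swaps that work uniformly for every $k\ge 4$ (including $k=4$, where $u_{k-4}=x$).  With $M=U(T')\cap V(T)$:
\begin{itemize}
\item If $N_T(u_{k-4})\cap M=\emptyset$, then $(U(T')\setminus\{u_{k-1}\})\cup\{u_{k-3}\}$ is a second $\mop(T')$-set, since the only possible conflicts for $u_{k-3}$ are through $u_{k-2}$ or $u_{k-4}$, and both are ruled out.  Hence some $T$-neighbor of $u_{k-4}$ lies in $M=U(T)$.
\item If $u_{k-4}\notin U(T')$, then $(U(T')\setminus\{u_k\})\cup\{u_{k-2}\}$ is a second $\mop(T')$-set, since the only possible conflict for $u_{k-2}$ is through $u_{k-3}$, which would require $u_{k-4}\in U(T')$.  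Hence $u_{k-4}\in U(T')$, so $u_{k-4}\in U(T)$.
\end{itemize}
These two swaps settle the Operation~1 hypotheses with no boundary analysis at all; inserting them in place of your step~(i)/(ii) completes the proof.
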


\begin{proof}
Let $U(T')$ be the unique $\mop(T')$-set. By Lemma~\ref{lem:leaf}, $u_k \in U(T')$. If $u_{k-3} \in U(T')$, then $U''=(U(T') - \{u_{k-3}\}) \cup \{u_{k-1}\}$ is $\mop(T')$-set different from $U(T')$, which contradicts the uniqueness of $U(T')$. Hence, $\{u_{k-1},u_k\} \subset U(T')$. Let $M= U(T')\cap V(T)$. If $u_{k-4}$ is not totally dominated by $M$, then $U''= (U(T') -  \{u_{k-1}\})\cup \{u_{k-3}\}$ is another $\mop(T')$-set different from $U(T')$, another contradiction. Therefore, $N_T(u_{k-4}) \cap M \ne \emptyset$. Denote by $z$ the vertex in $N_T(u_{k-4}) \cap M$. If $u_{k-4} \not\in U(T')$, then $U'''=(U(T') - \{u_{k}\})\cup \{u_{k-2}\}$ is another $\mop(T')$-set. Therefore, $u_{k-4} \in M$. Note that $M$ is a $\mop(T)$-set, and that $M$ contains $u_{k-4}$ and its neighbor $z$.

Finally, to see that $T \in \mT$, suppose that $S$ is a $\mop(T)$-set different from $M$. Then $S\cup \{u_{k-1},u_k\}$ and $M\cup \{u_{k-1},u_k\}$ are two different $\mop(T')$-sets, which is a contradiction. Therefore, $T\in \mT$ and note that $T'$ is obtained from $T$ by Operation 1 at $u_{k-4}$.
\end{proof}

Let $x$ be a leaf in $L(T')$. By using Lemma~\ref{lem:claim0} (multiple times if necessary) we may assume that $k\le 3$ for any path $x=u_0,u_1,\ldots,u_k$, where $\deg_{T'}(u_i)=2$ for $i\in [k-1]$ and $\deg_{T'}(u_k)=1$. In addition, by Corollary~\ref{cor:no11} and Lemmas~\ref{lem:no21} and~\ref{lem:no22}, we infer that at most one such path is of length $k \in \{1,2\}$, and all other such paths are of length $k=3$ (since $\deg_{T'}(x)\ge 3$ there is at least one such path of length $3$).

\begin{lem}\label{lem:claim33} Let $T' \in \mT$ and let $x$ be a leaf in $L(T')$. Suppose there are two paths $x,a,b,c$ and $x,a',b',c'$, where $\deg_{T'}(a)=\deg_{T'}(a')=\deg_{T'}(b)=\deg_{T'}(b')=2$, and $\deg_{T'}(c)=\deg_{T'}(c')=1$. If $T_1 = T'- \{a, b, c\}$ and $T = T' - \{a, b, c, a', b', c'\}$, then at least one of the trees $T,T_1$ is in $\mT$. In addition, $T'$ is obtainable from $T$ by Operation 3 at $x$, or $T'$ is obtainable from $T_1$ by Operation 2 at $x$.
\end{lem}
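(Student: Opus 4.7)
The plan is to first pin down the structure of $U(T')$ on the two appended pendant $P_3$-paths and then translate uniqueness of $U(T')$ into information about $T$ and $T_1$. I would begin by proving $\{c,c',b,b'\}\subseteq U(T')$ and $\{a,a',x\}\cap U(T')=\emptyset$. By Lemma~\ref{lem:leaf}, $c,c'\in U(T')$, and from $N(a)\cap N(c)=\{b\}$ and $N(a')\cap N(c')=\{b'\}$ we get $a,a'\notin U(T')$. If $x\in U(T')$, then the common neighbor $a$ of $x$ and $b$ forces $b\notin U(T')$, so $(U(T')\setminus\{x\})\cup\{b\}$ is a second maximum open packing (one checks $N(b)=\{a,c\}$ is disjoint from $N(v)$ for every other $v$ in the new set, since $a\notin V(T)$ and no vertex of $U(T')\setminus\{x,b\}$ has $a$ or $c$ as a neighbor). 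This contradiction gives $x\notin U(T')$. With $a,x\notin U(T')$, if $b\notin U(T')$ then no element of $U(T')$ has $a$ or $c$ in its open neighborhood, so $U(T')\cup\{b\}$ is a strictly larger open packing; hence $b\in U(T')$, and by symmetry $b'\in U(T')$.

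Let $M=U(T')\cap V(T)$, so $U(T')=M\cup\{b,c,b',c'\}$ and $x\notin M$. Define
\[
\alpha^+=\max\{|P|: P\text{ open packing of }T,\ x\in P\},\quad \alpha^-=\max\{|P|: P\text{ open packing of }T,\ x\notin P\}.
\]
A short case analysis on whether $x$ lies in an open packing $U$ of $T'$ gives $\mop(T')=\max(\alpha^++2,\alpha^-+4)$: if $x\in U$ each appended path contributes at most one element (using $\{a,c\}$-incompatibility and $\{x,b\}$-incompatibility), and if $x\notin U$ the maximum $2$ per path is attained by $\{b,c\}$ and $\{b',c'\}$. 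Since $x\notin U(T')$ we obtain $\mop(T')=\alpha^-+4$, $|M|=\alpha^-$, and uniqueness of $U(T')$ forces the strict inequality $\alpha^++2<\alpha^-+4$, i.e., $\alpha^+\le\alpha^-+1$; moreover $M$ is the unique open packing of $T$ of size $\alpha^-$ not containing $x$, since any rival $M'$ would give a second maximum open packing $M'\cup\{b,c,b',c'\}$ of $T'$. I would then prove $N_T(x)\cap M\neq\emptyset$: otherwise $M\cup\{a,b,b',c'\}$ would be an open packing of $T'$ of size $|U(T')|$ distinct from $U(T')$, since $N(a)=\{x,b\}$ meets $N(v)$ in the empty set for $v\in\{b,b',c'\}$ trivially and for $v=m\in M$ because $b\notin V(T)$ and $x\notin N_T(m)$ by the assumption.

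I would now split on $\alpha^+\le\alpha^-$ (Case A) versus $\alpha^+=\alpha^-+1$ (Case B), the only two possibilities. In Case A, the analogous analysis applied to $T_1$ gives $\mop(T_1)=\max(\alpha^++1,\alpha^-+2)=\alpha^-+2$, realized uniquely by $M\cup\{b',c'\}$ (uniqueness of $M$, the strictness $\alpha^++1<\alpha^-+2$, and $N_T(x)\cap M\neq\emptyset$ ruling out the alternative $M\cup\{a',b'\}$), so $T_1\in\mT$; then $x\notin U(T_1)$ and $M\subseteq U(T_1)$ supplies a neighbor of $x$ in $U(T_1)$, so Operation~2 at $x$ applies to $T_1$ and Lemma~\ref{lem:op2} yields $T'$. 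In Case B, $\mop(T)=\alpha^+=\alpha^-+1$; every maximum open packing $K$ of $T$ contains $x$, and $K\setminus\{x\}$ is an open packing of $T$ of size $\alpha^-$ not containing $x$, which must equal $M$, so $U(T)=M\cup\{x\}$ and $T\in\mT$. Since $M$ is the unique open packing of $T$ of size $\mop(T)-1=\alpha^-$ avoiding $x$ and $N_T(x)\cap M\neq\emptyset$ supplies the required neighbor of $x$ in $U(T)$, Operation~3 at $x$ applies to $T$, and Lemma~\ref{lem:op3} yields $T'$.

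The main obstacle is the initial structural pinning of $U(T')$: both excluding $x$ from $U(T')$ and establishing $M\cap N_T(x)\neq\emptyset$ rely on carefully crafted swap packings whose validity depends on which vertices of the two pendant $P_3$-paths can coexist with specific choices from $V(T)$. Once $\mop(T')=\alpha^-+4$ and the characterization of $M$ are in hand, the case split on $\alpha^+$ versus $\alpha^-$ selects the correct operation and the invocations of Lemmas~\ref{lem:op2} and~\ref{lem:op3} close the argument cleanly.
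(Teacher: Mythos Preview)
Your proof is correct and follows essentially the same approach as the paper's: both establish $\{b,b',c,c'\}\subseteq U(T')$, $x\notin U(T')$, and $N_T(x)\cap M\ne\emptyset$, then bifurcate on exactly the same dichotomy (your split $\alpha^{+}\le\alpha^{-}$ versus $\alpha^{+}=\alpha^{-}+1$ is equivalent to the paper's split on whether $T_1\in\mT$). The $\alpha^{\pm}$ parameters are a clean organizational device, but the substance of the argument is the same.
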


\begin{proof}   By Lemma~\ref{lem:leaf}, $\{c, c'\}\subset U(T')$. If $a\not\in N(U(T'))$, then $U(T')$ is not maximum because $U(T') \cup \{b\}$ is also an open packing of $T'$. Therefore, either $x \in U(T')$ or $b \in U(T')$. If $x \in U(T')$, then $(U(T') -  \{x\})\cup \{b, b'\}$ is an open packing of $T'$ of cardinality larger than $\mop(T')$, a contradiction. Thus, $\{b, b'\} \subset U(T')$.

Let $T_1 = T' - \{a, b, c\}$.  Let $\mop(T') = k$ and note that $U(T') - \{b,c\}$ is an open packing of $T_1$.  Hence $\mop(T_1) \ge k-2$. Suppose that $\mop(T_1) \ge k-1$ and let $X$ be a $\mop(T_1)$-set. It follows that $x \in X$, for otherwise $X \cup \{b,c\}$ is an open packing of $T'$ of cardinality $k+1$. However, $X \cup \{c\}$ is then a different maximum open packing of $T'$, which is a contradiction. Therefore, $\mop(T_1) = k-2$.

Next note that if $x$ is not in $N(U(T'))$, then $(U(T') -  \{c'\}) \cup \{a'\}$ is a different $\mop(T')$-set. Therefore, $x$ is totally dominated by $U(T')$ and there exists $y \in N_T(x) \cap U(T')$. Note, that if there exist two different $\mop(T_1)$-sets that do not contain $x$, then there exist two different $\mop(T')$-sets, which is a contradiction. This implies that $W_1=U(T')-\{b,c\}$ is the unique maximum open packing of $T_1$ that does not contain $x$. Now, if $T_1 \in \mT$, then $T'$ is obtained from $T_1$ using Operation $2$ at $x$.

Thus, we may assume $T_1 \not\in \mT$ and all $\mop(T_1)$-sets except $W_1$ contain $x$. Let $P$ be a $\mop(T_1)$-set that contains $x$. We know that $P$ contains precisely one of $a'$ or $c'$. Therefore, $\mop(T) \ge k-3$ since $P - \{a', c'\}$ is an open packing of $T$. On the other hand, if there exists an open packing $W$ of $T$, of cardinality $k-2$, then $W \cup \{c, c'\}$ is a different $\mop(T')$-set. Therefore $\mop(T) = k-3$. Let $R = V(T) \cap P$ and $R' = (R -  \{x\}) \cup \{b, b', c, c'\}$. We infer that $R' = U(T')$, for otherwise there would be two different $\mop(T')$-sets. This implies that there is exactly one $\mop(T)$-set that contains $x$. Moreover, every $\mop(T)$-set $S$ contains $x$, for otherwise $S\cup\{b,b',c,c'\}$ would be an open packing of $T'$ of size $k+1$. This in turn implies that $T\in\mT$ and the unique $\mop(T)$-set is $R$. In addition, $R  -  \{x\}$ is a unique open packing of $T$ that does not contain $x$ and is of size $\mop(T)-1$ (because $(R - \{x\})\cup \{b, b', c, c'\}$ is $U(T')$, which contains a neighbor of $x$).  So we can obtain $T'$ from $T$ by using Operation $3$ at $x$.
\end{proof}

Applying Lemma~\ref{lem:claim33} repeatedly we may assume that $k=3$ for at most one path $x=u_0,u_1,\ldots,u_k$, where $\deg_{T'}(u_i)=2$ for $i\in [k-1]$ and $\deg_{T'}(u_k)=1$. In addition, by Corollary~\ref{cor:no11} and Lemmas~\ref{lem:no21} and~\ref{lem:no22}, we may assume that there is one such path with $k=3$, for otherwise $\deg_{T'}(x)<3$ and $x$ is therefore not a leaf of $L(T')$. For the same reason (i.e., $\deg_{T'}(x)$ must be at least $3$), there is another path, which is either $x,e$ or $x,d,e$, where $\deg_{T'}(d)=2$ and $\deg_{T'}(e)=1$.
In the next lemma we deal with the former.

\begin{lem}\label{lem:claim31} Let $T' \in \mT$ and let $x$ be a leaf in $L(T')$ such that $\deg_{T'}(x)=3$. Suppose there is a path $x,a,b,c$ and a path $x,e$, where $\deg_{T'}(a)=\deg_{T'}(b)=2$, and $\deg_{T'}(c)=\deg_{T'}(e)=1$. If $\{y\} = N(x) - \{a,e\}$, and $T$ is the tree obtained from $T'$ by removing the vertices $x$ and $e$ and adding the edge $ay$, then $T$ is in $\mT$. In addition, $T'$ is obtainable from $T$ by Operation $4$ at $ay$.
\end{lem}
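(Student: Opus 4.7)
The plan is to determine $U(T')\cap\{x,a,b,c,e\}$, use uniqueness of $U(T')$ to show $y$ has a neighbor in $W:=U(T')\setminus\{b,c,e\}$, prove $T\in\mT$ with $U(T)=W\cup\{b,c\}$ by a case analysis on the membership of $a,y$ in an arbitrary large open packing of $T$, and finally check the Operation~4 conditions on edge $ay$.

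By Lemma~\ref{lem:leaf}, $c,e\in U(T')$. Since $N_{T'}(a)\cap N_{T'}(c)=\{b\}$, we have $a\notin U(T')$. If $x\in U(T')$, then $(U(T')\setminus\{x\})\cup\{b\}$ is a distinct maximum open packing of $T'$ (the compatibility of $b$ with the rest is routine), contradicting uniqueness; hence $x\notin U(T')$, and maximality then forces $b\in U(T')$. So $U(T')=W\cup\{b,c,e\}$ with $W\subseteq V(T')\setminus\{x,a,b,c,e\}$. Suppose toward contradiction that $y$ has no neighbor in $W$. Since $b,c,e$ are not adjacent to $y$ in $T'$, $y$ has no neighbor in $U(T')$, and I claim $(U(T')\setminus\{b\})\cup\{x\}$ is then another maximum open packing: the only nontrivial check is $x$ with some $w\in W$, where $N_{T'}(x)\cap N_{T'}(w)=\{a,e,y\}\cap N_{T'}(w)=\emptyset$ because $N_{T'}(a)=\{x,b\}$, $N_{T'}(e)=\{x\}$, and $y$ is not adjacent to $w$ by assumption. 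This contradicts $T'\in\mT$, so $y$ has a neighbor $w_0$ in $W$.

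For the reduction, $W\cup\{b,c\}$ is an open packing in $T$ because the neighborhoods of its vertices coincide in $T$ and $T'$ (none of them is $a$ or $y$), so $\mop(T)\ge\mop(T')-1$. Let $S$ be any open packing of $T$ with $|S|\ge\mop(T')-1$; I split four ways on whether $a,y\in S$. If $a,y\notin S$, then $S$ and $S\cup\{e\}$ are open packings of $T'$, which forces $|S|=\mop(T')-1$ and $S\cup\{e\}=U(T')$ by uniqueness, giving $S=W\cup\{b,c\}$. If $a\in S$ and $y\notin S$, then $c\notin S$ (from $N_T(a)\cap N_T(c)=\{b\}$) and $(S\setminus\{a\})\cup\{c,e\}$ is an open packing of $T'$ of size $|S|+1$, which by uniqueness forces $S=W\cup\{a,b\}$; but $a$ and $w_0$ are incompatible in $T$ (as $y\in N_T(a)\cap N_T(w_0)$), contradiction. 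If $y\in S$ and $a\notin S$, then $b\notin S$ (since $a\in N_T(b)\cap N_T(y)$) and $S\cup\{b\}$ is an open packing of $T'$ of size $|S|+1$, forcing $S=W\cup\{c,e\}$, impossible as $e\notin V(T)$. Finally if $a,y\in S$, then $b\notin S$ (again $a\in N_T(b)\cap N_T(y)$) and $(S\setminus\{a,y\})\cup\{b,c,e\}$ is an open packing of $T'$ of size $|S|+1$, so uniqueness of $U(T')$ forces $S=W\cup\{a,y\}$, incompatible via $a$ and $w_0$. Thus every such $S$ equals $W\cup\{b,c\}$, and we conclude $T\in\mT$ with $U(T)=W\cup\{b,c\}$.

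For Operation~4, $T$ is a tree so $ay$ is a cut edge; $a$ and $y$ both lie outside $U(T)=W\cup\{b,c\}$; $a$ has the $U(T)$-neighbor $b$; and $y$ has the $U(T)$-neighbor $w_0\in W$. All conditions of Operation~4 at $ay$ are satisfied, so $T'$ is obtained from $T$ by Operation~4 at $ay$. The main obstacle is the claim that $y$ is totally dominated by $W$, which requires the uniqueness (not merely the maximality) of $U(T')$ via the swap $b\leftrightarrow x$; once this is in hand, the case analysis in the main reduction excludes in turn each of the rival open packings $W\cup\{a,b\}$, $W\cup\{c,e\}$, and $W\cup\{a,y\}$.
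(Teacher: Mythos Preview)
Your proof is correct. Its organization differs from the paper's, though the underlying ideas are the same. The paper does not first establish that $y$ has a $U(T')$-neighbour; instead it lets $H=T-\{a,b,c\}$, shows by a cardinality count that any $\mop(T)$-set $S$ satisfies $|S\cap V(H)|=k-3$, derives $y\notin S$ and $b\in S$, and only then proves that $y$ is totally dominated by $S\cap V(H)$ (otherwise $(S\cap V(H))\cup\{e,x,c\}$ and $(S\cap V(H))\cup\{e,b,c\}$ are two distinct $\mop(T')$-sets). You reverse the order: you first prove $y$ has a neighbour $w_0$ in $W=U(T')\setminus\{b,c,e\}$ via the swap $b\leftrightarrow x$, and then run a four-way case split on $(a\in S,\,y\in S)$, using $w_0$ to kill the cases that force $a\in S$. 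Your approach is a bit more explicit and mechanical, while the paper's is shorter; both hinge on the same swap $b\leftrightarrow x$ to detect the extra $\mop(T')$-set when $y$ is not dominated.
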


\begin{proof} Assume $\mop(T')=k$. Note that $\{c,e\} \subseteq U(T')$, which implies that neither $a$ nor $y$ is in $U(T')$.  If $x \in U(T')$, then $(U(T') - \{x\}) \cup \{b\}$ is an open packing of $T'$ of size $k$, which contradicts the uniqueness of $U(T')$. Therefore $b\in U(T')$ and $x\notin U(T')$. Note that $U(T') - \{e\}$ is an open packing of $T$, hence $\mop(T) \ge k-1$.  Suppose there exists a $\mop(T)$-set $M$ of size $k$. Then $M$ is a $\mop(T')$-set which is different from $U(T')$. Thus, we infer $\mop(T) = k-1$.

Let $S$ be a $\mop(T)$-set and let $H = T - \{a, b, c \}$. Suppose $|S \cap V(H)|\ge k-2$. Then $(S \cap V(H))\cup\{b,c\}$ is a maximum open packing of $T'$ different from $U(T')$, a contradiction. Therefore, $|S \cap V(H)| = k-3$.
If $y\in S$, then $|S\cap\{a,b,c\}|\le 1$, which implies $|S|\le k-2$, a contradiction. Therefore, $y\notin S$, which implies that $b\in S$. We claim that $y$ is totally dominated by vertices in $S\cap V(H)$. If this claim were not true, then $(S \cap V(H))\cup\{e,x,c\}$ and $(S \cap V(H))\cup\{e,b,c\}$ are two different $\mop(T')$-sets. 
This contradiction says that $y$ is totally dominated by vertices in $S\cap V(H)$, which in turn implies $a\notin S$. Also, $S\cap V(H)=U(T')\cap V(H)$, because $|S\cap V(H)|=|U(T')\cap V(H)|=k-3$, none of the sets contains $y$, and $U(T')$ is unique.
Since $S$ is a maximum open packing of $T$ we conclude that $\{b,c\}\subset S$, which makes $S$ the unique $\mop(T)$-set of size $k-1$. Thus, $T \in \mT$, and $\{a,y\}\cap U(T)=\emptyset$. Hence, $T'$ can be obtained from $T$ by Operation $4$ at $ay$.
\end{proof}

Finally, we are in the case when $\deg_{T'}(x)=3$, there  are two paths $x,a,b,c$ and $x,d,e$ with $\deg_{T'}(a)=\deg_{T'}(b)=\deg_{T'}(d)=2$  and with $c$ and $e$ both leaves of $T'$.

\begin{lem}\label{lem:claim32} Let $T' \in \mT$ and let $x$ be a leaf in $L(T')$ such that $N_{T'}(x)=\{a,d,y\}$. Suppose  there is a path $x,a,b,c$ and a path $x,d,e$, where $\deg_{T'}(a)=\deg_{T'}(b)=\deg_{T'}(d)=2$, and $\deg_{T'}(c)=\deg_{T'}(e)=1$.  If $T_1 = T' - \{a, b, c\}$, then $T_1$ is in $\mT$. In addition, $T'$ is obtainable  from $T_1$ by Operation $2$ at $x$.
\end{lem}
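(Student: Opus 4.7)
My plan is to identify $U(T')$ on the vertices near $x$, certify $M := U(T') - \{b,c\}$ as the unique maximum open packing of $T_1$, and read off the conditions of Operation~$2$.

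First I pin down $U(T')$ locally. Lemma~\ref{lem:leaf} gives $c,e \in U(T')$, forcing $a \notin U(T')$ (since $b \in N(a)\cap N(c)$) and $x \notin U(T')$ (since $d \in N(x)\cap N(e)$). If $b \notin U(T')$ then $N(b)=\{a,c\}$ meets the neighborhood of no vertex in $U(T')$, contradicting maximality; so $b \in U(T')$. Similarly, maximality forces at least one of $d,y$ into $U(T')$, and the common neighbor $x$ forbids both. I rule out the subcase $y\in U(T')$, $d\notin U(T')$ via the swap $(U(T') - \{y\})\cup\{d\}$: for $u\in U(T') - \{y\}$, $x\in N(u)$ would put $u\in\{a,d,y\}$ (all excluded), and $e\in N(u)$ would force $u=d\notin U(T')$; hence the swap is an open packing of $T'$ of size $\mop(T')$ distinct from $U(T')$, contradicting $T'\in\mT$. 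Thus $d\in U(T')$ and $y\notin U(T')$.

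Next, setting $k=\mop(T')$ and $M=U(T') - \{b,c\}$, the set $M$ is an open packing of $T_1$ of size $k-2$. To bound $\mop(T_1)\le k-2$, let $X$ be an open packing of $T_1$ with $|X|\ge k-1$. If $x\notin X$, then $X\cup\{b,c\}$ is an open packing of $T'$ of size at least $k+1$, impossible; if $x\in X$, then $X\cup\{c\}$ has size at least $k$ and must equal $U(T')$ by uniqueness, but $b\in U(T')$ lies outside $V(T_1)\cup\{c\}$, a contradiction.

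For uniqueness, let $S$ be any maximum open packing of $T_1$. If $x\notin S$, then $S\cup\{b,c\}$ is an open packing of $T'$ of size $k$ that must equal $U(T')$, giving $S=M$. If $x\in S$ then $e\notin S$ (the common neighbor $d$ blocks coexistence), and lifting $(S - \{x\})\cup\{b,c,e\}$ yields an open packing of $T'$ of size $k$, which again equals $U(T')$ and forces $S=(M - \{e\})\cup\{x\}$. For this set to actually be an open packing of $T_1$, no neighbor of $y$ in $T_1$ may belong to $M$. Ruling out this configuration is the chief difficulty of the proof: I plan to exploit the structural assumptions implicit in the lemma's setting --- that the prior reductions (Lemmas~\ref{lem:claim0}, \ref{lem:claim33}, and~\ref{lem:claim31}) have been exhausted at every leaf of $L(T')$, together with Corollary~\ref{cor:no11} and Lemmas~\ref{lem:no21}--\ref{lem:no22} --- to produce a contradiction with $T'\in\mT$, concluding that $y$ must be totally dominated by $U(T')$ and hence $N_{T_1}(y)\cap M\ne\emptyset$.

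Finally, once $U(T_1)=M$ is established, $x\notin U(T_1)$ and $d\in N_{T_1}(x)\cap U(T_1)$ are exactly the preconditions of Operation~$2$ at $x$; the $P_3$ attached at $x$ is the path through $a,b,c$, so $T'$ is obtained from $T_1$ by Operation~$2$ at $x$.
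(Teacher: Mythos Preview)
Your overall outline mirrors the paper's: both establish $\{b,c,d,e\}\subseteq U(T')$, $x,y\notin U(T')$, and $\mop(T_1)=k-2$, then argue uniqueness by lifting a maximum open packing $S$ of $T_1$ back to $T'$. You split on whether $x\in S$; the paper splits on whether $y\in S$. You correctly isolate the one genuinely hard subcase, namely $S=(M-\{e\})\cup\{x\}=W\cup\{x,d\}$ (writing $T_2=T'-\{a,b,c,d,e,x\}$ and $W=U(T')\cap V(T_2)$), which is an open packing of $T_1$ precisely when no vertex of $W$ is adjacent to $y$. The paper's proof, in its case $y\notin S$, simply writes ``we get that $S=W\cup\{d,e\}$'' without addressing this alternative, so you have in fact put your finger on a step the paper glosses over.

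Your plan to close the gap by invoking the surrounding reductions cannot succeed, however, because the lemma \emph{as stated} is false. Take $T'$ on vertices $\{a,b,c,d,e,x,y,z,w_1,w_2,w_3,w_4,w_5\}$, where $x$ carries the pendant paths $x,a,b,c$ and $x,d,e$, and the third branch is $x\!-\!y\!-\!z$ with $z$ carrying pendant paths $z,w_1,w_2$ and $z,w_3,w_4,w_5$. One checks directly that $T'\in\mT$ with $U(T')=\{b,c,d,e,w_1,w_2,w_4,w_5\}$ (among the ten vertices outside $\{a,x,y\}$, the only way to omit just two and kill every conflicting pair is to omit $z$ and $w_3$), and all hypotheses of the lemma hold at $x$. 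Yet $T_1=T'-\{a,b,c\}$ has two maximum open packings of size $6$: both $\{d,e,w_1,w_2,w_4,w_5\}$ and $\{x,d,w_1,w_2,w_4,w_5\}$, since here $N_{T_2}(y)=\{z\}$ and $z\notin U(T')$. Moreover, in this $T'$ the prior reductions (Lemmas~\ref{lem:claim0}, \ref{lem:claim33}, \ref{lem:claim31}) are already exhausted at \emph{every} leaf of $L(T')$ --- both $x$ and $z$ have pendant paths of lengths $3$ and $2$ only --- so the ``implicit structural assumptions'' you hope to invoke are satisfied and still do not yield the conclusion. The gap is therefore shared by your proposal and by the paper's own proof, and the statement of the lemma itself appears to need repair.
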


\begin{proof}
Let $\mop(T') = k$.  Let $T_2=T'-\{a,b,c,d,e,x\}$, and let $W=U(T')\cap V(T_2)$. By Lemma~\ref{lem:leaf},
we know $\{c, e\} \subset U(T')$. Thus, neither $a$ nor $x$ is in $U(T')$.  It follows that $b \in U(T')$.
Furthermore, $d \in U(T')$, for otherwise $y \in U(T')$ and then  $(U(T')  -  \{y\}) \cup \{d\}$ is  another $\mop(T')$-set, which is a contradiction.
Hence, $\{b,c,d,e\} \subseteq U(T')$, $x \notin U(T')$,  $y \notin U(T')$ and $W$ is the only open packing of $T_2$ of cardinality $k-4$ that does not contain $y$.
Since $U(T') - \{b,c\}$ is an open packing of $T_1$, we know that $\mop(T_1) \ge k-2$. Suppose there exists an open packing $M$ of $T_1$ of cardinality at least $k-1$. If $x \not\in M$, then $M \cup \{b,c\}$ is an open packing of $T'$ of cardinality at least $k+1$ which is a contradiction.  On the other hand, if $x \in M$, then $(M - \{x\}) \cup \{b, c, e\}$ is an open
packing of $T'$ of cardinality at least $k+1$, which is also a contradiction.  Therefore, no such $M$ exists, and $\mop(T_1) = k-2$.
Let $X = V(T_1) \cap U(T')= U(T') - \{b,c\}$. The set $X$ is a maximum open packing of $T_1$.  If $T_1 \in \mT$, then $X=U(T_1)$ and $T'$ can be obtained from $T_1$ by Operation $2$ at $x$. Hence, it remains to prove that $T_1\in \mT$.

We claim that $X=W \cup \{d,e\}$ is the only open packing of $T_1$ of cardinality $k-2$.  Let $S$ be any open packing of $T_1$ such that $|S|=k-2$.  Suppose that $y \in S$.  Since $S$ is an open packing, $d \notin S$.  Since $S$ is a maximum open packing of $T_1$, we infer that  $|\{x,e\}\cap S|=1$.
Thus $S \cap (V(T_2) -  \{y\})$ is an open packing of $T_2$ of cardinality $k-4$ that does not contain $y$.  From above this means that $S \cap (V(T_2) -  \{y\})=W$.  But then $W \cup \{y,e,b,c\}$ is a $\mop(T')$-set that is different from $U(T')$, which is a contradiction.  Therefore, $y \notin S$.
Since $|S \cap \{x,d,e\}|\le 2$, it follows that $|S \cap V(T_2)| \ge k-4$ and $S \cap V(T_2)$ is an open packing of $T_2$ that does not contain $y$.  Since
$W$ is the only such open packing of $T_2$, we get that $S=W \cup \{d,e\}$, which coincides with $X$.  That is, $T_1\in \mT$, and $T'$ is obtainable from $T_1$ by Operation $2$ at $x$.
\end{proof}

Combining the above lemmas, we infer the following, our main result.
\begin{thm}
Let $T$ be an arbitrary tree. Then $T$ has a unique maximum open packing if and only
$T$ is $P_1$, $P_2$, or can be obtained from $P_2$ by a sequence of Operations 1-4.
\end{thm}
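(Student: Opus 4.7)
The plan is to prove both directions using the machinery already developed in the section. For the forward direction, $\mO \subseteq \mT$ is immediate from Lemmas~\ref{lem:op1}, \ref{lem:op2}, \ref{lem:op3}, and \ref{lem:op4}: starting from $P_1$ or $P_2$ (both trivially in $\mT$) and applying Operations 1--4 any finite number of times preserves membership in $\mC$, and clearly a tree in $\mC$ is in $\mT$.

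For the reverse inclusion $\mT \subseteq \mO$, I would induct on $n = |V(T')|$. The base cases $n \in \{1,2\}$ hold by definition of $\mO$. Suppose $T' \in \mT$ with $n \ge 3$. If $\Delta(T') = 2$, then $T'$ is a path, and Lemma~\ref{lem:paths} tells us that $T' = P_n$ with $n \equiv 2 \pmod 4$; such a path is obtained from $P_2$ by repeatedly appending a $P_4$ at an endvertex via Operation~1, so $T' \in \mO$. Otherwise $\Delta(T') \ge 3$, and the auxiliary tree $L(T')$ has at least one leaf, call it $x$. The idea is to reduce $T'$ to a smaller tree $T \in \mT$ that produces $T'$ via one of the four operations; then the induction hypothesis gives $T \in \mO$, and hence $T' \in \mO$.

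To execute the case analysis at $x$, I first use Corollary~\ref{cor:no11} together with Lemmas~\ref{lem:no21} and~\ref{lem:no22} to force every path branch of $T'$ at $x$ (that is, every component of $T'-x$ that is a path) to have length at most $3$, with at most one of those lengths being less than $3$. If some branch at $x$ has length $\ge 4$, Lemma~\ref{lem:claim0} removes the last four vertices and recovers $T'$ from the smaller tree via Operation~1; iterating this, I may assume every branch at $x$ has length in $\{1,2,3\}$. Since $\deg_{T'}(x) \ge 3$, at least one branch has length $3$. If at least two branches have length exactly $3$, Lemma~\ref{lem:claim33} provides the reduction, recovering $T'$ via either Operation~2 or Operation~3. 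Otherwise $\deg_{T'}(x) = 3$ and besides the length-$3$ branch there is a single shorter branch of length $1$ or $2$: the former is handled by Lemma~\ref{lem:claim31} via Operation~4, and the latter by Lemma~\ref{lem:claim32} via Operation~2.

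The main substantive content has been absorbed into Lemmas~\ref{lem:claim0}--\ref{lem:claim32}, each of which is responsible not only for performing the reduction but also for verifying that the resulting smaller tree lies in $\mT$ so that the induction hypothesis can be invoked; the hardest part of the whole argument is ensuring that the case analysis at $x$ is exhaustive and that in every configuration one of these reduction lemmas applies, which the preparatory normalization via Corollary~\ref{cor:no11}, Lemmas~\ref{lem:no21}, \ref{lem:no22}, and~\ref{lem:claim0} guarantees. Combining both directions completes the proof.
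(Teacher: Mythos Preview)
Your proposal is correct and follows the paper's approach exactly: the forward direction via Lemmas~\ref{lem:op1}--\ref{lem:op4}, and the reverse direction by induction on $|V(T')|$, handling paths through Lemma~\ref{lem:paths} and otherwise reducing at a leaf $x$ of $L(T')$ using Lemmas~\ref{lem:claim0}--\ref{lem:claim32} after the structural constraints from Corollary~\ref{cor:no11} and Lemmas~\ref{lem:no21}, \ref{lem:no22}. One small wording point: the upper bound of $3$ on branch lengths comes from Lemma~\ref{lem:claim0}, not from Corollary~\ref{cor:no11} and Lemmas~\ref{lem:no21}, \ref{lem:no22} (which only ensure at most one branch has length less than $3$), but you invoke Lemma~\ref{lem:claim0} correctly in the very next sentence, so the logic is sound.
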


%%%%%%%%%%%%%%%%%%%%%%%%%%%%%%
%%%%%%%%%%%%%%%%%%%%%
%%%%%%%%%%% C O M P U T A T I O N A L    S T U F F   %%%%%%%%%%%%%%%%
%%%%%%%%%%%%%%%%%%%%
%%%%%%%%%%%%%%%%%%%%%%%%%%%%%%

\section{Computational complexity of related problems}
\label{sec:complexity}

Given a graph $G$, consider the subdivision $S(G)$ of $G$. The vertices of $S(G)$ that are the result of the subdivision are {\em subdivided vertices}, while other vertices of $S(G)$ (which correspond to vertices of $G$) are {\em original vertices}. If $e\in E(G)$, then by $v_e$ we denote the subdivided vertex of $S(G)$ that corresponds to the edge $e$. Clearly, $v_e$ is in $S(G)$ adjacent only to two original vertices that correspond to the endvertices of $e$ in $G$.

\begin{thm}
If $G$ is a graph, then
$\mop(S(G))=\alpha(G)+\alpha'(G)$.
\end{thm}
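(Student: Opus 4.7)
The plan is to exhibit a two-way correspondence between open packings in $S(G)$ and pairs consisting of an independent set together with a matching in $G$. Throughout, I write an open packing $S$ of $S(G)$ as $S = A \cup B$, where $A = S \cap V(G)$ (the original vertices in $S$) and $B \subseteq \{v_e : e \in E(G)\}$ (the subdivided vertices in $S$).

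First I would compute the three types of pairwise interactions. For two original vertices $u, u' \in V(G)$, their open neighborhoods in $S(G)$ are the subdivided vertices $v_e$ with $e$ incident to $u$ and those with $e$ incident to $u'$; the intersection is nonempty precisely when there is an edge of $G$ between $u$ and $u'$. Hence $A \subseteq V(G)$ sits inside an open packing if and only if $A$ is independent in $G$. For two subdivided vertices $v_e, v_f$ (with $e, f \in E(G)$), the open neighborhoods in $S(G)$ are the endpoint sets of $e$ and $f$ respectively, so they are disjoint if and only if $e$ and $f$ share no endpoint; thus $B$ sits inside an open packing if and only if $\{e : v_e \in B\}$ is a matching in $G$. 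Finally, for an original vertex $u$ and a subdivided vertex $v_e$, the neighborhood of $u$ consists of subdivided vertices while the neighborhood of $v_e$ consists of original vertices, so $N(u) \cap N(v_e) = \emptyset$ automatically. The crucial consequence is that there is no interaction between the original part and the subdivided part of $S$.

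From this, both inequalities follow immediately. For the lower bound $\rho^{\mathrm{o}}(S(G)) \ge \alpha(G) + \alpha'(G)$, take a maximum independent set $A^* \subseteq V(G)$ and a maximum matching $M^* \subseteq E(G)$; then $A^* \cup \{v_e : e \in M^*\}$ is an open packing of $S(G)$ of size $\alpha(G) + \alpha'(G)$ by the three observations above. For the upper bound, take a maximum open packing $S$ of $S(G)$, decompose it as $S = A \cup B$, note that $A$ is independent in $G$ and $\{e : v_e \in B\}$ is a matching in $G$, and conclude $|S| = |A| + |B| \le \alpha(G) + \alpha'(G)$.

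Honestly, there is no serious obstacle here; the entire proof is the bookkeeping of the three cases in the first paragraph, and the most error-prone step is simply keeping the roles of original versus subdivided vertices straight when describing neighborhoods in $S(G)$. I would present the argument as a short labeled case analysis followed by the two one-line inequalities.
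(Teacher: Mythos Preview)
Your proof is correct and follows essentially the same approach as the paper: decompose an open packing of $S(G)$ into its original and subdivided parts, verify via three pairwise cases that these correspond to an independent set and a matching in $G$, and run both inequalities. Your handling of the mixed case is in fact slightly slicker than the paper's---you observe directly that $N_{S(G)}(u)$ contains only subdivided vertices while $N_{S(G)}(v_e)$ contains only original vertices, whereas the paper splits into the subcases $uv_e \in E(S(G))$ and $uv_e \notin E(S(G))$---but this is a cosmetic difference, not a different argument.
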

\begin{proof}
First, we prove that $\mop(S(G))\ge\alpha(G)+\alpha'(G)$.
Let $A$ be a maximum independent set and $M$ be a maximum matching in a graph $G$. Denote by $A'$ the set of original vertices in $S(G)$ that correspond to the vertices of $A$, and let $M'$ be the subdivided vertices of $S(G)$ that correspond to the edges of $M$. We claim that $A'\cup M'$ is an open packing of $S(G)$. Note that any two original vertices in $A'$ are at distance at least 4 in $S(G)$. The same holds for any two subdivided vertices in $M'$; indeed, since two edges $e,e'\in M$ have no common endvertex, the distance between the vertices $v_e$ and $v_{e'}$ in $S(G)$ is at least 4. Now, if $a\in A'$ and $v_e\in M'$, then we have two possibilities. If $av_e\in E(S(G))$, then $N(a)\cap N(v_e)=\emptyset$, because the only neighbor of $v_e$ different from $a$ is an original vertex $b$, hence $a$ and $b$ are not adjacent in $S(G)$. But if $a$ and $v_e$ are not adjacent, then their distance is at least 3. By this we conclude that $A'\cup M'$ is indeed an open packing of $S(G)$, thus $\mop(S(G))\ge\alpha(G)+\alpha'(G)$.

For the reversed inequality, consider a maximum open packing $U$ of $S(G)$, and let $A'$ be the set of original vertices in $U$ and $M'$ be the set of subdivided vertices in $U$. (Note that we cannot exclude that one of the sets $A'$ or $M'$ is empty.) Now, let $A$ be the set of original vertices that correspond to $A'$ and $M$ the set of edges that correspond to vertices in $M'$. It is clear that $A$ must be an independent set in $G$, because for two adjacent vertices $x,y\in V(G)$ the corresponding original vertices in $S(G)$ share a common neighbor, which is the vertex $v_{xy}$. Hence, $\alpha(G)\ge |A|=|A'|$. In a similar way, we deduce $\alpha'(G)\ge |M|=|M'|$, because the edges of $M$ must form a matching, since $M'$ is an open packing. Combining the latter inequalities, we get $\alpha(G)+\alpha'(G)\ge |A'|+|M'|=|U|=\mop(S(G))$.
\end{proof}

By a classical result of Edmonds~\cite{edm-1961}, a maximum matching of an arbitrary graph $G$, and hence also $\alpha'(G)$, can be obtained in polynomial time. On the other hand, the decision version of the independence number is a well-known NP-complete problem (its dual invariant vertex cover number appeared in Karp's list of NP-complete problems~\cite{karp}). This implies that the decision version of $\mop$ is an NP-complete problem. Due to our reduction, it is NP-complete even in subdivision graphs (that is, the class of graphs that can be realized as subdivision $S(G)$ of some graph $G$), and consequently also in graphs of girth at least 6. We therefore derive the following result.

\begin{cor}
Let $G$ be a graph, and $k$ a positive integer. The problem of deciding whether $\mop(G)\ge k$ is NP-complete, even if $G$ is restricted to subdivision graphs.
\end{cor}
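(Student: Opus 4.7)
The plan is to prove membership in NP first, which is routine: a candidate set $S \subseteq V(G)$ with $|S| \ge k$ serves as a certificate, and we verify in polynomial time that $S$ is an open packing by checking for every pair $u,v \in S$ that $N(u) \cap N(v) = \emptyset$. The more substantial part is NP-hardness, and in particular NP-hardness restricted to the class of subdivision graphs.

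For NP-hardness, I would reduce from the classical INDEPENDENT SET problem, whose NP-completeness is cited in the paragraph immediately preceding the corollary. Given an instance $(G,k)$ of INDEPENDENT SET, the reduction constructs the subdivision $S(G)$ and simultaneously computes $\alpha'(G)$ in polynomial time using Edmonds' matching algorithm (as noted in the text). The reduction then outputs the instance $(S(G), k + \alpha'(G))$ of the open packing decision problem. By the theorem just proved, $\mop(S(G)) = \alpha(G) + \alpha'(G)$, so the inequality $\mop(S(G)) \ge k + \alpha'(G)$ holds if and only if $\alpha(G) \ge k$. Thus the reduction is correct, and since both $S(G)$ and $\alpha'(G)$ are computable in polynomial time in the size of $G$, the whole reduction is polynomial. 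The output graph $S(G)$ is, by construction, a subdivision graph.

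I would then briefly justify the parenthetical remark about girth. Any cycle in $S(G)$ arises from a cycle of $G$ with each edge subdivided once, so cycles in $S(G)$ have length exactly twice the length of the corresponding cycle in $G$; since $G$ has cycles of length at least $3$ (if any), $S(G)$ has girth at least $6$ (and is bipartite). Consequently the reduction also witnesses NP-completeness on the class of graphs of girth at least $6$.

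The main obstacle is not really technical; it is simply to notice that, in order to turn the identity $\mop(S(G)) = \alpha(G) + \alpha'(G)$ into a Karp reduction, the additive term $\alpha'(G)$ must be computable in polynomial time. This is exactly what Edmonds' theorem provides, so the reduction is clean. Beyond this, the only careful point is to observe that the output lies in the required restricted class (subdivision graphs, hence also girth at least $6$), which is immediate from how $S(G)$ is defined.
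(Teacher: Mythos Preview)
Your proposal is correct and follows essentially the same approach as the paper: the paper derives the corollary directly from the identity $\mop(S(G))=\alpha(G)+\alpha'(G)$ together with Edmonds' polynomial-time matching algorithm and the NP-completeness of independent set. You have simply made the Karp reduction and the NP membership argument more explicit than the paper does, and your girth observation matches the paper's remark.
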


We next focus on the computational aspects with respect to the classes $\mC$ and $\mT$. To the best of our knowledge, graphs that have a unique maximum open packing have not been studied before. On the other hand, graphs with a unique maximum independent set appeared in several contexts. Recently, Jaume and Molina~\cite{jm-2018} considered the trees with a unique maximum independent set in the context of the so-called null decomposition of trees. This is a decomposition of a tree into two types of trees, one of which are the trees with a unique maximum independent set. In addition, these trees are characterized through a property of their adjacency matrix as follows. The null space ${\cal N}(T)$ of a tree $T$ is defined as the null space of its adjacency matrix, and its {\em support}, $supp({\cal N}(T))$, is the set of coordinates that are non-zero for at least one of the generators of ${\cal N}(T)$. Now, a tree $T$ has a unique maximum independent set if and only if the set $supp({\cal N}(T))$ is a dominating set of $T$. Since the null space of a tree (and therefore also $supp({\cal N}(T))$) can be computed in polynomial time, and it is trivial to check if $supp({\cal N}(T))$ is a dominating set of $T$, we infer that the class of trees with a unique maximum independent set can be recognized efficiently.

Since the independent set and open packing are related problems we wonder whether a similar algebraic approach could work for graphs with unique maximum open packing. In addition, could a more involved analysis of Operations 1-4 from the previous section, result in an efficient algorithm for recognition of the trees in $\mT$? We pose this as a question and suspect it has a positive answer.

\begin{ques}
Is there an algorithm to efficiently recognize the trees in $\mT$?
\end{ques}

%%%%%%%%%%%%%%%%%%%%%%%%
%%%%%%%%%%
%%%%%%%%%%%%%%%%%%%%%%

We now focus on the problem of recognizing the graphs in $\mC$, which we relate to two similar problems. Recall that a {\em 2-packing} in a graph $G$ is a set of vertices with the property that their closed neighborhoods are pairwise disjoint. (The {\em closed neighborhood} of a vertex $x\in V(G)$ is defined as $N[x]=N(x)\cup\{x\}$.) Alternatively, $S$ is a 2-packing in $G$ if the distance between any two distinct vertices in $S$ is at least 3. The {\em 2-packing number}, $\rho(G)$, of $G$ is the maximum cardinality of a 2-packing in $G$.

We denote by $\mCtwo$ the class of graphs $G$ that have a unique maximum 2-packing, and by $\mCalpha$ the class of graphs with a unique maximum independent set. We next prove that the problems of recognizing the graphs in classes $\mC,\mCtwo$ and $\mCalpha$, respectively, are polynomially equivalent problems. In particular, a polynomial algorithm that recognizes graphs from one of the classes implies the existence of a polynomial algorithm to recognize graphs from the other two classes.

We start by showing how the problem of recognizing whether $G$ is a graph with a unique maximum 2-packing translates to the problem of recognizing whether a graph is in $\mC$. For this purpose, we need some more definitions.
The {\em distance} in a graph $G$ between two vertices $x$ and $y$ is defined as the length of a shortest path between $x$ and $y$, and we denote it by $d_G(x,y)$. The {\em diameter} of $G$, ${\rm diam}(G)$, is the maximum distance between vertices in $G$.

Starting from an arbitrary connected graph $G$, we construct a graph $G'$ so that for each edge $e=xy$ in $G$ we add a new vertex $v_e$, and connect $v_e$ by an edge with $x$ and with $y$. Finally, for every two distinct edges $e,f\in E(G)$ we make $v_e$ and $v_f$ adjacent in $G'$.
Note that vertices of the set $C=\{v_e\,:\, e\in E(G)\}$ induce a clique in $G'$,  and for any two vertices $x,y\in V(G)$, we have $d_{G'}(x,y)=\min\{d_G(x,y),3\}$. If $x\in V(G)$ and $e\in E(G)$, then $d_{G'}(x,v_e)\le 2$, and $d_{G'}(x,v_e)=1$ if and only if $x$ is an endvertex of $e$. We also infer that ${\rm diam}(G')=\min\{{\rm diam}(G),3\}$.

\begin{lem}
If $G$ is a connected graph, then $\mop(G')=\rho(G)$.  In addition, $G$ is in $\mCtwo$ if and only if $G'$ is in $\mC$.
\end{lem}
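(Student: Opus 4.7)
The plan is to analyze the possible structure of a maximum open packing $U$ of $G'$ in terms of its intersection with the original vertices $V(G)$ and with the clique $C = \{v_e : e \in E(G)\}$, and then show that the maximum open packings of $G'$ correspond bijectively to the maximum $2$-packings of $G$ whenever $\rho(G) \ge 2$.

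First I would establish $\mop(G') \ge \rho(G)$: a maximum $2$-packing $S$ of $G$ becomes an open packing in $G'$ when viewed as a subset of $V(G) \subseteq V(G')$, because for distinct $x, y \in S$, $d_G(x,y) \ge 3$ rules out common neighbors in $G$, and $xy \notin E(G)$ means $v_{xy}$ does not exist in $C$, so no vertex of $C$ is adjacent to both.

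For the reverse inequality $\mop(G') \le \rho(G)$, let $U$ be a maximum open packing of $G'$, write $A = U \cap V(G)$ and $B = U \cap C$. The crucial claim is that if $|U| \ge 2$, then $B = \emptyset$. I would argue this in two substeps. First, $|B| \le 1$: any two distinct $v_e, v_f \in C$ share a common neighbor either through a third edge $g$ (giving $v_g$, whenever $|E(G)| \ge 3$), or through a shared endpoint, which must exist when $|E(G)| = 2$ by connectedness of $G$. Second, if $A \neq \emptyset$ and $v_e \in B$ (with $e = pq$), then for any $x \in A$ there is an edge $f$ incident to $x$ (since $G$ is connected with at least one edge); if $f \neq e$ then $v_f$ is a common neighbor of $x$ and $v_e$, while if $f = e$ then $x \in \{p, q\}$ and $\deg_G(x) = 1$ (otherwise another incident edge yields a $v_{f'}$ common neighbor), but then the other endpoint of $e$ is a common neighbor of $x$ and $v_e$. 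This contradiction forces $U = A \subseteq V(G)$, and the open packing condition together with the absence of $v_{xy}$ as a common neighbor yields $d_G(x,y) \ge 3$ for all distinct $x, y \in A$; hence $A$ is a $2$-packing of $G$ and $|U| \le \rho(G)$. When $|U| = 1$ the inequality is trivial.

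For the biconditional, in the case $\rho(G) \ge 2$ the analysis above shows the identity map $S \mapsto S$ is a bijection between maximum $2$-packings of $G$ and maximum open packings of $G'$, so uniqueness in one setting is equivalent to uniqueness in the other. In the remaining case $\rho(G) = 1$ (equivalently ${\rm diam}(G) \le 2$), both memberships reduce to $G = K_1$: if $G = K_1$ then $G' = K_1$ and each has a unique maximum packing, while if $|V(G)| \ge 2$ then $|V(G')| \ge 3$ and each singleton vertex gives a distinct maximum $2$-packing of $G$ and a distinct maximum open packing of $G'$, so neither is unique. The main obstacle is the case analysis forcing $B = \emptyset$ whenever $|U| \ge 2$; once this is in place, both parts of the lemma follow by routine verification.
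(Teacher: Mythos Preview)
Your proof is correct and follows essentially the same approach as the paper's: both arguments hinge on showing that any open packing of $G'$ containing a vertex of the clique $C$ must be a singleton, then identify the remaining open packings of $G'$ with $2$-packings of $G$, and finally treat the $\rho(G)=1$ (equivalently $\mathrm{diam}(G)\le 2$) case separately. Your write-up is in fact more explicit than the paper's in justifying why a vertex $v_e \in C$ shares a common neighbor with every other vertex of $G'$, which the paper simply asserts.
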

\begin{proof}
Let $G'$ be obtained by the described construction from a (nontrivial) connected graph $G$, and let $S$ be a maximum open packing of $G'$ (the case when $G$ is isomorphic to $K_1$ is clear). Note that any two adjacent vertices $x,y\in V(G)$ have $v_{xy}$ as their common neighbor in $G'$. Hence at most one of the adjacent vertices $x,y$ belongs to $S$.

Suppose that $C\cap S\ne \emptyset$, where $C$ is the set of vertices that are not in $V(G)$ (and arise from edges of $G$). If $v\in C\cap S$, then note that $v$ is the only vertex of $S$. Hence, $\mop(G')=1$, which is only possible when ${\rm diam}(G)\le 2$. But when ${\rm diam}(G)\le 2$, we have $\mop(G')=1=\rho(G)$, $G$ has more than one maximum 2-packing, and also $G'$ has more than one maximum open packing. In this case, the proof is done.

Now, we may assume that $C\cap S=\emptyset$ and ${\rm diam}(G)>2$. Let $T\subset V(G)$. Note that $T$ is a (maximum) 2-packing of $G$ if and only if $T$ is a (maximum) open packing of $G'$. From this we infer $\mop(G')=\rho(G)$, and the second statement
of the lemma also readily follows.
\end{proof}

For the reversed translation we provide the following construction. Let $G$ be an arbitrary graph on $n$ vertices. From $G$ we construct the graph $G^{+}$ as follows. For each vertex $u\in V(G)$ we add six additional vertices denoted $u_1,u_2,u_3,u_1',u_2',u_3'$. Connect with an edge all pairs $u_i,u_j$ when $i\ne j$; also connect each $u_i$ with $u$ and with $u_i'$ for all $i\in\{1,2,3\}$. Finally, for every edge $uv\in E(G)$, add an edge between $u_1'$ and $v$, and add an edge between $v_1'$ and $u$.
Note that $|V(G^{+})|=7n$, and for any $u\in V(G)$ we have $\deg_{G^{+}}(u)=2\deg_G(u)+3$, $\deg_{G^{+}}(u_i)=4$, $\deg_{G^{+}}(u_1')=\deg_G(u)+1$, and $\deg_{G^{+}}(u_2')=1=\deg_{G^{+}}(u_3')$.

\begin{lem}
If $G$ is a graph on $n$ vertices, then $\rho(G^{+})=2n+\mop(G)$.  In addition, $G$ is in $\mC$ if and only if $G^{+}$ is in $\mCtwo$.
\end{lem}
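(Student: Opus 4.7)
The plan is to analyze any $2$-packing of $G^+$ gadget by gadget, where the \emph{gadget} at $u\in V(G)$ is $H_u=\{u,u_1,u_2,u_3,u_1',u_2',u_3'\}$, and then use this to pin down every maximum $2$-packing. The first step is to establish the relevant distance facts. Within $H_u$ the three vertices $u_1',u_2',u_3'$ are pairwise at distance exactly $3$ while every other pair in $H_u$ is at distance at most $2$, so $S\cap H_u$ is of one of the forms $\emptyset$, a singleton, $\{u_i',u_j'\}$, or $\{u_1',u_2',u_3'\}$; in particular $|S\cap H_u|\leq 3$ with equality only for the triple. For distinct $u,v\in V(G)$ and any $j,k\in\{1,2,3\}$, I will show $d_{G^+}(u_j',v_k')\geq 3$ whenever $(j,k)\neq(1,1)$, and crucially $d_{G^+}(u_1',v_1')\geq 3$ if and only if $N_G(u)\cap N_G(v)=\emptyset$. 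This last equivalence is the main technical point, but it follows routinely from $N_{G^+}(u_1')=\{u_1\}\cup N_G(u)$: a length-$2$ path between $u_1'$ and $v_1'$ exists precisely when $u$ and $v$ share a common neighbor in $G$.

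For the lower bound $\rho(G^+)\geq 2n+\mop(G)$, I will take a maximum open packing $P$ of $G$ and verify that
$$S\;=\;\{u_2',u_3':u\in V(G)\}\cup\{u_1':u\in P\}$$
is a $2$-packing of $G^+$ of size $2n+|P|=2n+\mop(G)$; every pair can be checked to be at distance at least $3$ using the facts above (the within-gadget distances handle pairs inside a single $H_u$, while the open-packing condition on $P$ supplies precisely what is needed to avoid a length-$2$ path between two vertices $u_1',v_1'$ with $u,v\in P$).

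For the upper bound, let $S$ be any $2$-packing of $G^+$ and set $P_1=\{u\in V(G):u_1'\in S\}$. The inter-gadget fact about $u_1',v_1'$ forces $P_1$ to be an open packing of $G$, so $|P_1|\leq\mop(G)$. For $u\notin P_1$ the only size-$3$ subset of $H_u$ that could lie in $S$ is excluded, hence $|S\cap H_u|\leq 2$. Summing over all gadgets,
$$|S|\;\leq\;3|P_1|+2(n-|P_1|)\;=\;2n+|P_1|\;\leq\;2n+\mop(G),$$
matching the lower bound and establishing $\rho(G^+)=2n+\mop(G)$.

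Finally, for the uniqueness statement, a maximum $2$-packing $S$ forces equality throughout the chain above: $P_1$ must be a maximum open packing of $G$, $S\cap H_u=\{u_1',u_2',u_3'\}$ for every $u\in P_1$, and $S\cap H_u=\{u_2',u_3'\}$ for every $u\notin P_1$. Hence $S$ is uniquely determined by $P_1$, and the correspondence $P\mapsto\{u_2',u_3':u\in V(G)\}\cup\{u_1':u\in P\}$ is a bijection between maximum open packings of $G$ and maximum $2$-packings of $G^+$. Therefore $G^+\in\mCtwo$ if and only if $G\in\mC$.
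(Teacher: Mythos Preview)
Your proof is correct and follows essentially the same approach as the paper's: both analyze a maximum $2$-packing $S$ gadget by gadget, observe that $S\cap H_u\subseteq\{u_1',u_2',u_3'\}$ (with equality forced for $u\in P_1$ and $S\cap H_u=\{u_2',u_3'\}$ forced for $u\notin P_1$), identify $\{u:u_1'\in S\}$ with an open packing of $G$ via the key fact $N_{G^+}(u_1')=\{u_1\}\cup N_G(u)$, and conclude by the resulting bijection between maximum open packings of $G$ and maximum $2$-packings of $G^{+}$. Your presentation is somewhat more explicit about the distance computations and uses a clean counting inequality $|S|\le 3|P_1|+2(n-|P_1|)$, but the underlying argument is the same.
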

\begin{proof}
Let $S$ be a maximum 2-packing of $G^{+}$. Note that for any $u\in V(G)$, there can be at most three vertices in $S\cap \{u_1,u_2,u_3,u_1',u_2',u_3'\}$, namely,   $u_1',u_2',u_3'$, which form a 2-packing. Also note that $u\notin S$, because $u$ is at distance at most 2 from any other vertex in $\{u_1,u_2,u_3,u_1',u_2',u_3'\}$; from the same reason, $u_i\notin S$ for any $i\in [3]$. Next, it is easy to see that for any $u\in V(G)$, we have $u_2'\in S$ and $u_3'\in S$. Now, if $u$ and $v$ have a common neighbor in $G$, then at most one of the vertices $u_1'$ and $v_1'$ can be in $S$. We infer that the set $\{u\in V(G)\,:\,u_1'\in S\}$ is an open packing of $G$, and it is a maximum open packing of $G$, since $S$ is maximum. Thus, $\rho(G^{+})=2n+\mop(G)$.

In the previous paragraph we established that any maximum 2-packing  of $G^{+}$ is obtained by taking the union of $\{u_2',u_3'\,:\, u\in V(G)\}$ and $\{u_1'\,:\, u\in P\}$, where $P$ is a maximum open packing of $G$. Conversely, note that for any maximum open packing $P$ of $G$, the union of $\{u_2',u_3'\,:\, u\in V(G)\}$ and $\{u_1'\,:\, u\in P\}$ is a maximum 2-packing of $G^{+}$. Both statements imply that $G$ is in $\mC$ if and only if $G^{+}$ is in $\mCtwo$.
\end{proof}

In the next two lemmas, we show that the recognition problems for graphs with a unique maximum independent set and graphs with a unique maximum 2-packing are polynomially equivalent.

Let $G$ be an arbitrary graph, and let $V(G)=\{a_1,\ldots, a_n\}$. From $G$ we construct the graph $G^*$ as follows.  First, we start with the Cartesian product $G\cp K_n$, which is defined as follows. Let vertices of $K_n$ be denoted by $\{v_1,\ldots,v_n\}$.  Then, $V(G\cp K_n)=V(G)\times V(K_n)$, and the edges of $G\cp K_n$ are $(a_i,v_k)(a_j,v_k)$ whenever $a_ia_j\in E(G)$, and $(a_i,v_k)(a_i,v_l)$ for any $a_i\in V(G)$ and any distinct $v_k,v_l\in V(K_n)$.
Now, for each $(i,j)\in [n]\times [n]$ such that $i\neq j$, let $b_{ij}$ and $c_{ij}$ be two new vertices of $G^*$. Make $c_{ij}$ and $b_{ij}$ adjacent in $G^*$, and also connect each $b_{ij}$ with $(a_i,v_j)$ in $G^*$. Finally, for each $i\in [n]$ make $b_{ij}$ and $b_{ik}$ adjacent whenever $j\ne k$.

\begin{lem}
If $G$ is a graph on $n$ vertices, then $\rho(G^*)=n(n-1)+\alpha(G)$. In addition, $G$ is in $\mCalpha$ if and only if $G^*$ is in $\mCtwo$.
\end{lem}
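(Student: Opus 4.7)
The plan is to analyze a maximum 2-packing in $G^{*}$ gadget by gadget, where for each $i\in[n]$ the ``$i$-th gadget'' $G_i$ consists of the column $T_i=\{(a_i,v_j):1\le j\le n\}$ together with the auxiliary vertices $M_i=\{b_{ij}:j\ne i\}$ and the leaves $L_i=\{c_{ij}:j\ne i\}$; these gadgets partition $V(G^{*})$. By routine distance computations inside $G^{*}$ one verifies: both $T_i$ and $M_i$ induce cliques; $b_{ij}$ is within distance $2$ of every other vertex of $G_i$; for $j\ne i$, $(a_i,v_j)$ is at distance $2$ from $c_{ij}$ but at distance $3$ from every other vertex of $L_i$; and the diagonal vertex $(a_i,v_i)$ is at distance $3$ from every $c_{ij}\in L_i$. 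From these observations it follows that for any 2-packing $S$ one has $|S\cap G_i|\le n$, with equality iff $S\cap G_i=L_i\cup\{(a_i,v_i)\}$, and $|S\cap G_i|=n-1$ only when $S\cap G_i$ equals either $L_i$ or $(L_i\setminus\{c_{ij}\})\cup\{(a_i,v_j)\}$ for some $j\ne i$.

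For inter-gadget distances, observe that no shortest path between vertices in different gadgets can be shortened by an edge internal to a third gadget, so for $i\ne k$ one has $d_{G^{*}}((a_i,v_j),(a_k,v_l))=d_G(a_i,a_k)+d_{K_n}(v_j,v_l)$; additionally every $c$-vertex is at distance at least $3$ from every vertex outside its own gadget. Setting $I=\{a_i:(a_i,v_i)\in S\}$, the compatibility of $(a_i,v_i)$ and $(a_k,v_k)$---which lie at distance $d_G(a_i,a_k)+1$---forces $a_ia_k\notin E(G)$, so $I$ is independent in $G$. Combined with the single-gadget bounds, $|S|\le n|I|+(n-1)(n-|I|)=n(n-1)+|I|\le n(n-1)+\alpha(G)$, and this bound is achieved by the 2-packing $\bigl(\bigcup_{i\in I}(L_i\cup\{(a_i,v_i)\})\bigr)\cup\bigl(\bigcup_{i\notin I}L_i\bigr)$ whenever $I$ is a maximum independent set of $G$. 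This proves $\rho(G^{*})=n(n-1)+\alpha(G)$ and shows that every maximum independent set of $G$ yields a maximum 2-packing of $G^{*}$.

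For the uniqueness equivalence, the critical step is to exclude, for each $i\notin I$, the alternative local choice $S\cap G_i=(L_i\setminus\{c_{ij}\})\cup\{(a_i,v_j)\}$. If $(a_i,v_j)\in S$ and $(a_k,v_k)\in S$ for some $k\in I$, the inter-gadget distance formula requires $d_G(a_i,a_k)\ge 3$ when $j=k$ and $d_G(a_i,a_k)\ge 2$ when $j\ne k$; in either case $a_i$ is non-adjacent to every element of $I$. Since $I$ is a maximum independent set, every vertex outside $I$ must have a neighbor in $I$, yielding a contradiction. Hence $S\cap G_i=L_i$ is forced for every $i\notin I$, so $S$ is completely determined by $I$, and the assignment $I\mapsto S$ is a bijection between the maximum independent sets of $G$ and the maximum 2-packings of $G^{*}$. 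In particular, $G\in\mCalpha$ if and only if $G^{*}\in\mCtwo$. The main technical hurdle is keeping the numerous but routine distance computations straight, and recognising that the final step uses the maximal---not merely maximum---property of $I$.
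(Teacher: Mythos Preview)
Your argument is correct and reaches the same bijection between maximum independent sets of $G$ and maximum $2$-packings of $G^{*}$ that the paper uses, but the route you take is more explicit. The paper simply asserts that in any maximum $2$-packing $P$ one has $C\subseteq P$, $B\cap P=\emptyset$, and $(a_i,v_j)\in P$ only when $i=j$, and then reads off the result; no justification is offered for these structural claims. You instead run a gadget-by-gadget count, proving $|S\cap G_i|\le n$ with equality only for $L_i\cup\{(a_i,v_i)\}$, summing to obtain $|S|\le n(n-1)+|I|$, and then---once $|I|=\alpha(G)$ is known---invoking the \emph{maximality} of $I$ (every vertex outside a maximum independent set must have a neighbour inside it) to rule out the off-diagonal configuration $(L_i\setminus\{c_{ij}\})\cup\{(a_i,v_j)\}$. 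That maximality step is precisely the missing argument behind the paper's bald assertion that off-diagonal product vertices cannot occur, so your version actually closes a gap the paper leaves open; the cost is a longer proof with several distance checks.

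One small inaccuracy to fix: your stated list of the $|S\cap G_i|=n-1$ configurations omits $(L_i\setminus\{c_{ij}\})\cup\{(a_i,v_i)\}$, which is also a valid local $2$-packing of that size. This does no damage to the proof, because you only apply the characterisation to indices $i\notin I$, where $(a_i,v_i)\notin S$ by definition of $I$; but as a free-standing claim about all $2$-packings it is false and should be qualified by the hypothesis $(a_i,v_i)\notin S$.
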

\begin{proof}
Let $C=\{c_{ij}\,:\, (i,j)\in ([n]\times [n]-\{(i,i)\,:\,i\in [n]\})\}$ denote the set of vertices of degree 1 in $G^{*}$, and $B=\{b_{ij}\,:\, (i,j)\in ([n]\times [n]-\{(i,i)\,:\,i\in [n]\})\}$ the set of its neighbors.

Suppose $S$ is a maximum independent set in $G$. Note that then the set $C\cup \{(a_i,v_i)\,:\, a_i\in S\}$ is a 2-packing in $G^{*}$ with size $n(n-1)+\alpha(G)$, which implies $\rho(G^*)\ge n(n-1)+\alpha(G)$. For the reversed inequality first observe that all vertices of $C$ must belong to a maximum 2-packing $P$ of $G^{*}$. Indeed, $P$ should not contain any vertex of $B$, and $(a_i,v_j)\in P$ only if $i=j$. However, if $(a_i,v_i)\in P$, then for any $a_j$, which is adjacent to $a_i$ in $G$, we have $(a_j,v_j)\notin P$. In other words, $\{a_i\,:\, P\cap \{(a_i,v_i)\,:\, i\in [n]\}\neq\emptyset \}$ is an independent set in $G$, which implies that $\rho(G^*)\le n(n-1)+\alpha(G)$.

Since $C$ is in any maximum 2-packing $P$ of $G^*$, this yields the structure of maximum 2-packings of $G^*$, which are formed as the union of $C$ and the set of vertices $(a_i,v_i)$, where vertices $a_i$ form a maximum independent set $I$ of $G$. Therefore, we get that $P$ is a unique maximum 2-packing of $G^*$ if and only if $I$ is a unique maximum independent set of $G$.
\end{proof}

Given a graph $G$, the {\em square} $G^2$ of $G$ is obtained from $G$ by adding edges between any two vertices of $G$ that are at distance $2$.

\begin{lem}
If $G$ is a graph, then $\alpha(G^2)=\rho(G)$. In addition, $G$ is in $\mCtwo$ if and only if $G^2$ is in $\mCalpha$.
\end{lem}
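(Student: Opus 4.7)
The plan is to observe that the vertex sets of $G$ and $G^2$ coincide, and that the defining condition for a $2$-packing in $G$ matches exactly the defining condition for an independent set in $G^2$. Specifically, two distinct vertices $u,v$ are non-adjacent in $G^2$ if and only if $d_G(u,v)\ge 3$, which is precisely the requirement $N_G[u]\cap N_G[v]=\emptyset$.

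From this I would first conclude that a set $S\subseteq V(G)$ is a $2$-packing in $G$ if and only if $S$ is an independent set in $G^2$. Indeed, $S$ is a $2$-packing in $G$ iff any two distinct vertices in $S$ are at distance at least $3$ in $G$; but by the construction of the square, this is exactly the statement that no two distinct vertices of $S$ are adjacent in $G^2$.

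Taking cardinalities on both sides of this bijection between $2$-packings of $G$ and independent sets of $G^2$, the maxima agree, so $\alpha(G^2)=\rho(G)$. For the second statement, note that the equivalence above is a literal equality of the two families of sets: the collection of maximum $2$-packings of $G$ equals the collection of maximum independent sets of $G^2$. Hence exactly one of these families has a single element if and only if the other does, i.e.\ $G\in\mCtwo$ if and only if $G^2\in\mCalpha$.

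There is no real obstacle here; the only thing to be careful about is the trivial case where $G$ has an isolated vertex or is $K_1$, but in either case the equality $N_G[u]\cap N_G[v]=\emptyset\iff uv\notin E(G^2)$ still holds verbatim, so no separate argument is needed. The whole proof amounts to unpacking the two definitions and matching them.
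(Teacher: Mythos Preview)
Your proof is correct and is essentially the same as the paper's own argument: both reduce to the observation that a subset $S\subseteq V(G)$ is a $2$-packing in $G$ if and only if $S$ is an independent set in $G^2$, from which both conclusions follow immediately. You simply spell out the distance characterization a bit more explicitly than the paper does.
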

\begin{proof}
Let $S\subset V(G)$ be any set of vertices in $G$. Note that $S$ is a 2-packing in $G$ if and only if $S$ is an independent set in $G^2$. From this we readily derive both statements of the lemma.
\end{proof}

From the above lemmas and translations one infers that the computational complexity for recognizing one of the three classes of graphs implies polynomially equivalent computational complexity for recognizing the other two classes.

\begin{thm}
\label{thm:equivalence}
The recognition of the graphs from classes $\mC,\mCtwo$ and $\mCalpha$ are polynomially equivalent problems.
\end{thm}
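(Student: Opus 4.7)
The plan is to assemble the four constructions from the preceding lemmas into a cycle of polynomial-time reductions linking the three recognition problems, and then observe that each construction is computable in time polynomial in $|V(G)|$ (in fact, in small-degree polynomial time), so composing any two of them remains polynomial. This is essentially a bookkeeping argument on top of the work already done.

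First I would set up the reductions explicitly. The lemma concerning $G\mapsto G'$ gives a polynomial-time reduction from $\mCtwo$-recognition to $\mC$-recognition, since $G'$ has $|V(G)|+|E(G)|$ vertices and can be built in time $O(|V(G)|^2)$. The lemma concerning $G\mapsto G^{+}$ goes the other way, reducing $\mC$-recognition to $\mCtwo$-recognition, with $|V(G^{+})|=7|V(G)|$ and construction time polynomial in the size of $G$. The lemma concerning $G\mapsto G^*$ gives a reduction from $\mCalpha$-recognition to $\mCtwo$-recognition; here $|V(G^*)|=n^2+2n(n-1)=O(n^2)$ and the construction is clearly polynomial. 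Finally, the lemma concerning $G\mapsto G^2$ yields a reduction from $\mCtwo$-recognition to $\mCalpha$-recognition, and $G^2$ is computable from $G$ in polynomial time (for instance by squaring the adjacency matrix, or by a BFS from each vertex).

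With these four maps in hand, I would then close the loop. From the first pair of reductions, $\mC$-recognition and $\mCtwo$-recognition are polynomially equivalent. From the second pair, $\mCtwo$-recognition and $\mCalpha$-recognition are polynomially equivalent. Combining, a polynomial algorithm for any one of $\mC$, $\mCtwo$, $\mCalpha$ yields polynomial algorithms for the other two, proving the theorem. Concretely, I would organize the proof as the statement ``it suffices to exhibit polynomial-time reductions $\mCtwo \le_p \mC$, $\mC \le_p \mCtwo$, $\mCalpha \le_p \mCtwo$, and $\mCtwo \le_p \mCalpha$,'' followed by a one-line invocation of each of the four lemmas and a remark on the polynomial bound for each construction.

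I do not anticipate a real obstacle here: the heavy lifting (correctness of each construction) is done in the four lemmas, and verifying that each map is computable in polynomial time is routine since all the graphs $G'$, $G^{+}$, $G^*$, and $G^2$ have size polynomial in $|V(G)|$ and a description that can be written out in the same order of time. The only mild care needed is to confirm that the transformations respect the hypotheses on input graphs (e.g.\ the $G\mapsto G'$ construction was stated for connected $G$, so in the reduction one first tests connectedness and, if necessary, handles disconnected graphs componentwise, which is polynomial and preserves membership in $\mCtwo$). Once these small points are noted, the theorem follows immediately.
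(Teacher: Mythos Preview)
Your proposal is correct and follows exactly the paper's approach: the theorem is stated as an immediate consequence of the four preceding lemmas, with the only additional content being the (routine) observation that each of the constructions $G\mapsto G'$, $G\mapsto G^{+}$, $G\mapsto G^{*}$, $G\mapsto G^{2}$ can be carried out in polynomial time. Your remark about handling the connectedness hypothesis on $G\mapsto G'$ componentwise is a sensible precaution (the paper does not address it explicitly), and it is correct since a graph lies in $\mCtwo$ if and only if each of its components does.
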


We conclude by observing that the recognition of graphs in each of these classes of graphs is likely to be computationally hard.

\begin{thm}
The problem of recognizing the graphs in $\mC$, respectively $\mCtwo$ and $\mCalpha$, is not  polynomial, unless P=NP.
\end{thm}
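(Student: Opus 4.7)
The plan is to prove that recognizing $\mCalpha$ is coNP-hard, which via Theorem~\ref{thm:equivalence} transfers the same hardness to $\mC$ and $\mCtwo$ and yields the claimed conclusion: if any of the three recognition problems were in P, then P=coNP=NP. I target $\mCalpha$ because the well-understood 3-SAT-to-Independent-Set gadget plugs in almost verbatim, whereas a direct reduction to $\mC$ would have to navigate the extra ``neighborhoods'' constraint.

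The reduction will be from UNSAT. Given a 3-CNF formula $\varphi$ with $m$ clauses, I first build the standard conflict graph $H$ with three vertices per clause forming a triangle, plus an edge between every pair of complementary literals in different clauses; it is classical that $\alpha(H)\le m$, with equality if and only if $\varphi$ is satisfiable. I then introduce $m$ pairwise non-adjacent new vertices $B=\{b_1,\dots,b_m\}$ and make each $b_i$ adjacent to every vertex of $H$; call the resulting graph $G_\varphi$. The construction is clearly polynomial in the size of $\varphi$.

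The verification hinges on one structural observation: since each $b_i$ is universally adjacent to $V(H)$, no independent set of $G_\varphi$ can contain both a $b_i$ and a vertex of $H$. Hence every independent set of $G_\varphi$ lies entirely in $B$ or entirely in $V(H)$, so $\alpha(G_\varphi)=\max(m,\alpha(H))=m$, and the maximum independent sets of $G_\varphi$ are exactly $B$ itself together with those independent sets of $H$ of size $m$. Consequently $G_\varphi\in\mCalpha$ if and only if $H$ admits no independent set of size $m$, which is equivalent to $\varphi$ being unsatisfiable. This is a polynomial many-one reduction from UNSAT to the recognition of $\mCalpha$, establishing coNP-hardness.

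The main obstacle I anticipate is not the reduction idea itself but the care needed around degenerate cases of the padding: one must verify that $\alpha(H)\le m$ holds unconditionally (guaranteed by the triangle per clause), that $B$ always realizes a maximum independent set (guaranteed by the universal $B$-to-$V(H)$ adjacencies and the fact that $B$ is independent), and that no ``mixed'' independent set can sneak in with size exceeding $m$ (also from the universal adjacencies). Once these three points are nailed down, coNP-hardness of $\mCalpha$ is proved, and Theorem~\ref{thm:equivalence} immediately delivers the same statement for $\mC$ and $\mCtwo$.
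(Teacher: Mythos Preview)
Your proposal is correct. Both your argument and the paper's proof reduce to $\mCalpha$ via Theorem~\ref{thm:equivalence} and rest on the same join construction $H\vee\overline{K_m}$: adding $m$ pairwise non-adjacent universal vertices forces $\alpha=m$ and makes uniqueness of the maximum independent set equivalent to $\alpha(H)<m$. The difference is in packaging. The paper gives a polynomial \emph{Turing} reduction from the independence number: for an arbitrary graph $G$ they test whether $G\vee\overline{K_r}\in\mCalpha$ for $r=|V(G)|,|V(G)|-1,\ldots$ and read off $\alpha(G)$ as the largest $r$ for which the answer is ``no''. You instead compose the join with the classical 3-SAT-to-Independent-Set gadget to obtain a polynomial \emph{many-one} reduction from UNSAT. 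Your route actually yields the slightly sharper statement that recognizing $\mCalpha$ is coNP-hard, while the paper's route is shorter because it does not need to revisit any details of the SAT reduction. Either way the theorem follows.
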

\begin{proof}
By Theorem~\ref{thm:equivalence}, it suffices to show the statement of the theorem only for the class $\mCalpha$. Suppose that there exists a polynomial time algorithm to decide whether an arbitrary graph $G$ is in the class $\mCalpha$ or not. For a positive integer $r$, let $G_r$ be the graph $G \vee\overline{K_r}$, obtained as the join of $G$ and the edgeless graph on $r$ vertices. Note that the independence number $\alpha(G)$ of $G$ equals the largest integer $r$ such that $G_r$ is not in $\mCalpha$. Hence, checking whether $G_r\in \mCalpha$ for positive integers $r$, starting with $r=|V(G)|$, and decreasing $r$ by 1, until $r$ reaches $\alpha(G)$, results in a polynomial time algorithm to determine $\alpha(G)$. Since the decision version of the independence number of a graph is an NP-complete problem, this is only possible if P=NP.
\end{proof}

\end{document}